\documentclass[11pt,a4paper]{article}
\pagestyle{plain}
\usepackage[utf8]{inputenc}
\usepackage[english]{babel}
\usepackage{amsmath}
\usepackage{amsfonts}
\usepackage{amssymb}
\usepackage[left=3cm,right=3cm,top=3cm,bottom=3cm]{geometry} 

\usepackage{amssymb}
\usepackage{amsthm}

\usepackage{amsfonts}
\usepackage{amssymb}
\usepackage{indentfirst}
\usepackage{color,colortbl}
\usepackage[table]{xcolor}
\usepackage{stmaryrd}

\usepackage{array}

\usepackage{float}

\usepackage[pdftex]{graphicx}
\usepackage{caption}

\usepackage{mathrsfs}
\usepackage{tikz}
\newtheorem{theorem}{Theorem}[section]

\newtheorem{prop}[theorem]{Proposition}

\newenvironment{rmq}[1][Remark]{\begin{trivlist}
\item[\hskip \labelsep {\bfseries #1}]}{\end{trivlist}}

\usepackage{natbib}

\usepackage[titletoc]{appendix} 

\usepackage{hyperref} 
\hypersetup{
colorlinks=true, 
breaklinks=true, 
urlcolor= blue, 
linkcolor= blue, 
citecolor=blue, 
} 

\title{\textbf{Partial Least Square\\ A new statistical insight through the prism of orthogonal polynomials}}

\author{M\'elanie~Blaz\`ere,
        Jean-Michel~Loubes
        and~Fabrice~Gamboa}

\date{} 

\begin{document}

\maketitle

\begin{abstract}
Partial Least Square (PLS) is a dimension reduction method used to remove multicollinearities in a regression model. However contrary to Principal Components Analysis (PCA) the PLS components are also choosen to be optimal for predicting the response $Y$. In this paper we provide a new and explicit formula for the residuals. We show that the residuals are completely determined by the spectrum of the design matrix and by the noise on the observations. Because few are known on the behaviour of the PLS components we also investigate their statistical properties in a regression context. New results on regression and prediction error for PLS are stated under the assumption of a low variance of the noise.
\end{abstract}

\begin{center}
{\bf \small Keywords} 
\end{center}
\begin{center}
Partial Least Square, multivariate regression, multicollinearity, dimension reduction, constrainsted least square, orthogonal polynomials, prediction error.
\end{center}

\section{Introduction}
Partial Least Square (PLS), introduced in 1985 by \cite{WO85}, is nowadays a widely used dimension reduction technique in multivariate analysis especially when we have to handle high dimensional or highly correlated data in a regression context. Originally designed to remove the problem of multicollinearity in the set of explanatory variables, PLS acts as a dimension reduction method by creating a new subset of variables which are also optimal for predicting the output variable. During the last decades this method has been developped and studied for a large part by \cite{HE88} and by \cite{FRANCK93}. Partial Least Square was originally developped for chemometrics applications (see for example \cite{WOLD01PLS} and \cite{FRANCK93}) but gained attention in biosciences in particular in the analysis of high dimensional genomic data. We refer for instance to \cite{BOU07} or to  \cite{MR2457048}  for various applications in this field. 

If the PLS method proved helpful in a large variety of situations, this iterative procedure is complex and  little is known about its theoretical properties but PLS has been well investigated by pratical experiments. To name just a few, \cite{NAES85} discussed theoretical and computational considerations of PLS and PCR (Principal Component Regression) on simulated and real data. \cite{FRANCK93} provided a heuristic comparison of the performances of OLS, PCR, Ridge regression and PLS in different situations. \cite{GAR94} compared PLS with four other methods (ordinary least squares, forward variable selection, principal components regression, and a Stein shrinkage method) through simulations. Only  very recently, some theoretical insights have been given by \cite{DEL12} for functionnal data. 
 
In this work, we provide a new direction to analyze some statistical aspects of the PLS method. For this, we will draw connections between PLS, Krylov subspaces and the regularization of inverse problems (see \cite{MR1408680}).\\
\indent The paper falls into the following sections. In Section \ref{section:framework} we present the framework within which we study PLS and we briefly recall what is the PLS method. We also highlight the connection between PLS and Krylov subspaces. Because the directions of the new subspace onto which we project the observations depend on the response variable it is quite difficult to study the statistical properties of PLS using just the algorithmic construction of the new subspace.
  In this paper we adopt the point of view of PLS viewed as a constrainsted least square problem and use its connection with inverse problem with a statistical point of view. In Section \ref{section:Krylov} we highlights the connection between PLS and the minimization of the least squares over polynomial subspaces. Then in Section \ref{section:residual} we provide a new formulation of the residuals for each direction defined by the eigenvectors of the covariance matrix. The interest of such a formulation rests on the fact that it provides an explicit expression of the residuals in terms of both the noise on the observations and on the eigenelements of the covariance matrix. This formulation will enable a further study of the PLS method performance in a regression framework. In Section \ref{section:error} we study PLS in the context of a high-dimensional multiple regression model. We first define the model under study. Then we detail our main results for noisy sample and new statistical aspects of PLS. We provide bounds for the empirical risk and for the mean square error of prediction under the assumption of a low variance of the noise. Asymptotic properties of the prediction error are also discussed. We also highlight the limitations of this method according to the features of the data.

\section{Presentation of the framework}
\label{section:framework}
\subsection{Notation and remarks}
We first introduce some of the notation we use in this paper. 
By $\left\langle x,y\right\rangle $ we denote the inner product between the vectors
$x, y \in \mathbb{R}^{n}$. The transpose of a matrix $A$ is denoted by $A^{T}$ and it depends on the underlying
inner product, i.e. $\left\langle Ax,y \right\rangle=\left\langle x,A^{T}y \right\rangle$. The induced vector norm is $\Vert x\Vert = \sqrt{\left\langle x,x\right\rangle }$. In most cases we work with the Euclidean inner product i.e. $\left\langle x,y\right\rangle=x^{T}y $ and
the induced norm is the $\ell_{2}$-norm. For any positive definite matrix $M$, the $M$-inner product is defined as $\left\langle x,y \right\rangle_{M} = x^{T}My$ and the operator norm is given by $\Vert M\Vert = \underset{\Vert x\Vert=1}{\rm{max}}\Vert Mx\Vert$. We simply denote by $I$ the identity matrix with the corresponding dimension. 
For every $k \in \mathbb{N}$ we denote by $\mathcal{P}_{k}$ the set of the polynomials of degree less than $k$ and by $\mathcal{P}_{k,1}$ the set of the polynomial of degree less than $k$ whose constant term equals 1.

The figures which appear in this paper are the result of simulations which have been performed with R using the package \texttt{plsgenomics} developped by Boulesteix and al. The function \texttt{pls.regression} has been used to fit the model. 

\subsection{The regression model}
We consider the following regression model
\begin{equation}
\label{eq:regression-model}
Y=X\beta^{*}+\varepsilon
\end{equation}
 where 
\begin{itemize}
\item [\textbullet]$Y=(Y_{1},...,Y_{n})^{T} \in \mathbb{R}^{n}$ is the vector of the observed outcome, also called the response. 
\item [\textbullet] $X=(X_{ij})_{1\leqslant i\leqslant n,1\leqslant j\leqslant p} \in \mathbb{M}_{n\times p}$ is the design matrix which is considered as fixed and contains the predictors.
\item [\textbullet]$\beta^{*}=(\beta_{1}^*,...,\beta_{p}^*)^{T} \in \mathbb{R}^{p} $ is the unknown parameter vector and represents the variables of interest. 
\item[\textbullet]$\varepsilon=(\varepsilon_{1},...,\varepsilon_{n})^{T} \in \mathbb{R}^{n}$ captures the noise. 
\end{itemize}
In other word we are concerned with finding a good approximation $\hat{\beta}$ of the solution $\beta^{*}$ of the above linear problem where only noisy observations are available.
For the moment we only assume that the real variables $\varepsilon_{1},...,\varepsilon_{n}$ are unobservable i.i.d random variables.
We allow $p$ to be much larger than $n$ i.e $p\gg n$. We denote by $r$ the rank of $X^{T}X$. Of course $r\leqslant \textrm{min}(n,p)$. The aim is to estimate the unknown parameter $\beta^*$ from the observations of the pairs $(Y_{i},X_{i})_{1\leq i \leq n}$. The usual ordinary least squares (OLS) estimates $\beta^{*}$ by $\hat{\beta }$ where $$\hat{\beta }\in \underset{\beta \in \mathbb{R}^{p}}{\textrm{argmin}}\Vert Y-X\beta\Vert^{2}.$$ However we know that in the case of highly correlated explanatory variables and/or when the explanatory variables outnumber the observations i.e $p\gg n$ the regression model is ill conditioned and the OLS estimator behaves badly. The estimated parameter can be very unstable and far from the target leading to unaccurate predictions. To remove the problem of multicollinearities in regression model a solution consists of creating latent variables using Principal Component Analysis (PCA). However, the new variables are chosen to explain $X$ but they may not explain $Y$ well. \cite{JO82} provided several real-life examples where the principal components corresponding to small eigenvalues have high correlation with $Y$. To avoid this problem a possible solution is to use Partial Least Square which has been heavily promoted as an alternative to OLS in the literature.

\subsection{The PLS method}
In this subsection we briefly recall the method. The PLS method, introduced by \cite{WO85}, emerged in order to remove the problem of multicollinearity in a regression model (when the number of covariates is large or when there are dependancies between them). In fact PLS is a statistical method whose challenge is to find principal components that explain $X$ as well as possible and are also good predictors for $Y$.

The PLS method at step $K$ (where $K\leqslant r$) consists in finding $(w_{k})_{1\leq k\leq K}$ and $(t_{k})_{1\leq k\leq K}$ which maximize $\left[ \textrm{Cov}(Y,Xw_{k})\right] ^{2}$ under the constraint
\begin{enumerate}
\item $\Vert w_{k}\Vert^{2}=w_{k}^{T}w_{k}=1$
\item $t_{k}=Xw_{k}$ is orthogonal to $t_{1},...,t_{k-1}$ i.e $w_{k}X^{T}Xw_{l}=0$ for $l=1,...,k-1$.
\end{enumerate} 
Therefore the PLS method is a procedure which iteratively constructs a subspace of dimension $K$ (spanned by $(w_{k})_{1\leq k \leq K}$) in such a way that the new latent variables $(t_{k})_{1\leq k \leq K}$ (which are the projections of the original ones) maximize both the correlation with the response and the variance of the explanatory variables. The original algorithms were developped by \cite{WO83} and a decade later by \cite{MA92}. 

Once the latent variables $(t_{k})_{1\leq k \leq K}$ are built, one can compute the linear regression of $Y$ on $t_{1},...,t_{K}$ to estimate $\beta^{*}$. We can notice that this method is of particular interest because it can analyze data with strongly correlated, noisy and numerous covariates. Furthermore dimension reduction and regression are performed simultaneously. 
We refer to \cite{HE90,HE88,HE01} for the the study of the main properties of PLS and to \cite{KRA07} for a complete overview on the recent advances on PLS. Proposition \ref{prop:CLS} below recalls what appears to us as one of the main result on PLS because it is the starting point of our work. This proposition shows that the PLS estimator at step $K$ is defined as the argument which minimizes the least square over some particular subspace of dimension $K$.

\begin{prop}\cite{HE90}
\label{prop:CLS}
\begin{equation}
\hat{\beta}_{K}^{PLS}= \underset{\beta \in \mathcal{K}^{K}(X^{T}X, X^{T}Y)}{\textrm{argmin}}\Vert Y-X\beta\Vert^{2}
\label{eq:esti_betabis} 
\end{equation}
where $\mathcal{K}^{k}(X^{T}X,X^{T}Y)=\left\lbrace X^{T}Y, (X^{T}X)X^{T}Y,..., (X^{T}X)^{k-1}X^{T}Y\right\rbrace$.
\end{prop}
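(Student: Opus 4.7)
The plan is to characterize the PLS weights $w_k$ via the Lagrangian of the constrained optimization that defines them, show by induction that each $w_k$ lies in the Krylov subspace $\mathcal{K}^{k}(X^{T}X,X^{T}Y)$, and then interpret the final OLS regression of $Y$ on $(t_1,\dots,t_K)$ as a least squares minimization over this subspace.

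First I would rewrite the PLS criterion at step $k$ explicitly. Since $Y$ and $X$ are (implicitly) centred, $[\mathrm{Cov}(Y,Xw_k)]^{2}$ is proportional to $(w_k^{T}X^{T}Y)^{2}$, so $w_k$ maximizes $(w^{T}X^{T}Y)^{2}$ under $\|w\|^{2}=1$ and $w^{T}X^{T}Xw_l=0$ for $l<k$. Writing the Lagrangian
\[
\mathcal{L}(w,\lambda,\mu_1,\dots,\mu_{k-1})=(w^{T}X^{T}Y)^{2}-\lambda(\|w\|^{2}-1)-\sum_{l=1}^{k-1}\mu_l\, w^{T}X^{T}Xw_l,
\]
the first-order condition gives
\[
2(w_k^{T}X^{T}Y)\,X^{T}Y \;=\; 2\lambda w_k + \sum_{l=1}^{k-1}\mu_l\, X^{T}Xw_l,
\]
which expresses $w_k$ as a linear combination of $X^{T}Y$ and the vectors $X^{T}Xw_l$ for $l=1,\dots,k-1$ (we may discard the degenerate case $w_k^{T}X^{T}Y=0$, which would mean the optimum is zero and PLS stops).

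Next I would proceed by induction on $k$. The base case $k=1$ is Cauchy--Schwarz: the maximizer of $(w^{T}X^{T}Y)^{2}$ under $\|w\|=1$ is $w_1=X^{T}Y/\|X^{T}Y\|\in\mathcal{K}^{1}$. For the induction step, if $w_l\in\mathcal{K}^{l}(X^{T}X,X^{T}Y)$ for every $l<k$, then $X^{T}Xw_l\in \mathrm{span}\{(X^{T}X)X^{T}Y,\dots,(X^{T}X)^{l}X^{T}Y\}$, and feeding these into the identity above yields $w_k\in\mathcal{K}^{k}(X^{T}X,X^{T}Y)$. To upgrade the inclusion $\mathrm{span}\{w_1,\dots,w_K\}\subseteq \mathcal{K}^{K}$ into an equality, I would use the conjugacy constraint $w_k^{T}X^{T}Xw_l=0$ for $l<k$, which (together with $t_k\neq 0$ as long as the algorithm has not terminated) forces the $w_k$'s to be linearly independent; a dimension count then gives $\mathrm{span}\{w_1,\dots,w_K\}=\mathcal{K}^{K}(X^{T}X,X^{T}Y)$.

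Finally, the PLS estimator at step $K$ is $\hat{\beta}_{K}^{PLS}=\sum_{k=1}^{K}\alpha_k w_k$ where the $\alpha_k$'s are the OLS coefficients of $Y$ on the orthogonal latent variables $t_k=Xw_k$. Because the $t_k$'s are an orthogonal basis of $X\cdot\mathcal{K}^{K}(X^{T}X,X^{T}Y)$, the vector $X\hat{\beta}_{K}^{PLS}$ is the orthogonal projection of $Y$ onto $X\cdot\mathcal{K}^{K}(X^{T}X,X^{T}Y)$, which means exactly that $\hat{\beta}_{K}^{PLS}$ minimizes $\|Y-X\beta\|^{2}$ over $\beta\in\mathcal{K}^{K}(X^{T}X,X^{T}Y)$, which is the claim. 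The main obstacle is the induction step: one has to extract from the Lagrangian characterization the precise statement that the multipliers $\mu_l$ contribute powers up to $(X^{T}X)^{k-1}X^{T}Y$ and no higher, and to rule out degenerate situations (premature termination, or $w_k^{T}X^{T}Y=0$) where the recursion could collapse before reaching dimension $K$.
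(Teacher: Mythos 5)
The paper does not prove this proposition: it is stated as a recalled result with a citation to Helland (1990), and no argument for it appears anywhere in the text. Your blind proof is therefore not competing with an internal proof but reconstructing the cited one, and it does so correctly: the Lagrangian first-order condition showing $w_k\in\mathrm{span}\{X^TY,\,X^TXw_1,\dots,X^TXw_{k-1}\}$, the induction giving $\mathrm{span}\{w_1,\dots,w_K\}\subseteq\mathcal{K}^K(X^TX,X^TY)$, the dimension count upgrading this to equality, and the identification of $X\hat{\beta}_K^{PLS}$ with the orthogonal projection of $Y$ onto $X\cdot\mathcal{K}^K$ are exactly the ingredients of Helland's argument. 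Two small points deserve to be made explicit. First, multiplying the stationarity equation by $w_k^T$ and using the constraints gives $\lambda=(w_k^TX^TY)^2$, so the multiplier you divide by is nonzero precisely outside the degenerate case you already discard; this closes the only real loophole in the induction step. Second, for the final identification you need the argmin over $\mathcal{K}^K$ to be the single vector $\hat{\beta}_K^{PLS}$ rather than a coset, which follows because $X$ is injective on $\mathcal{K}^K$ (the $t_k=Xw_k$ are nonzero, mutually orthogonal, hence linearly independent, so a nontrivial kernel element of $X$ in $\mathrm{span}\{w_1,\dots,w_K\}=\mathcal{K}^K$ is impossible). Note also that you are entitled to work with the non-deflated constraints $w_k^TX^TXw_l=0$ because that is the form of the algorithm the paper adopts in Section 2.3; in the original NIPALS formulation with deflation one would need the additional (standard) lemma that the two formulations produce the same fitted values.
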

The space $\mathcal{K}^{k}(X^{T}X,X^{T}Y)$ spanned by $X^{T}Y, (X^{T}X)X^{T}Y,..., (X^{T}X)^{k-1}X^{T}Y$ (and denoted by $\mathcal{K}^{k}$ when there is no possible confusion) is called the $k^{th}$ Krylov subspace. We refer to \cite{SAAD92} for a further study of these spaces.
We can notice that, as for PCR, PLS is a constrainted least square estimator where the constraints are not on the norm of the parameter (as for Ridge regression or for the Lasso) but are linear constaints which ensure that the estimated parameter belongs to the Krylov subspace associated to  $X^{T}X$ and to $X^{T}Y$. However we have to be careful that contrary to PCR  the PLS linear constraints are random. 

Using this connection with Krylov subspaces \cite{PHATAK02} showed that the PLS iterates are the same as the ones of the Conjugate Gradient(CG). Thus PLS can also be viewed as CG applied in the statistical framework of linear regression models. Phatak and de Hoog also used the connection between CG, Lanczos method and PLS to give simplier proofs of two known results. The first one is the shrinkage properties of PLS  $( \Vert \hat{\beta}_{k}^{PLS}\Vert\leqslant \Vert \hat{\beta}_{k+1}^{PLS}\Vert) $ proved by \cite{JONG95} and the second is the fact that PLS fits better than PCR $( \Vert \hat{\beta}_{k}^{PLS}\Vert\leqslant\Vert \hat{\beta}_{OLS}\Vert) $ proved by \cite{GOU96}.

\section{A close connection between PLS and orthogonal polynomials}
\label{section:Krylov}
In this section we show that the PLS solution can be written as the polynomial solution of a minimization problem. Then we prove that the sequence of the residuals in each eigenvectors direction can be expressed as a sequence of orthogonal polynomials with respect to a discrete measure. This measure depends on the eigenvalues of the design matrix and on the projection of the response onto the associated eigenvectors.

\subsection{A useful tool to analyze the properties of PLS}
Consider the singular value decomposition of $X$ given by $$X=UDV^{T}$$ where
\begin{itemize}
\item $U^{T}U=I$ and $u_{1},...,u_{p}$ are the columns of $U$ and form an orthonormal basis of $\mathbb{R}^n$.
\item $V^{T}V=I$ and $v_{1},...,v_{p}$ are the columns of $V$ and form an orthonormal basis of $\mathbb{R}^n$.
\item $D\in \mathbb{M}_{n,p}$ is a matrix which contains $(\sqrt{\lambda_{1}},...,\sqrt{\lambda_{n}})$ on the diagonal and zero anywhere else.
\item We assume that $\lambda_{1}\geq \lambda_{2}\geq ....\geq\lambda_{n}>0=\lambda_{n+1}=...=\lambda_{p}$. In other words we assume that $X^{T}X$ is of full rank i.e of rank $n$.
\end{itemize}
Of course we have $X^{T}u_{i}=\sqrt{\lambda_{i}}v_{i}$, $XX^{T}u_{i}=\lambda_{i} u_{i}$ for all $i=1,...,n$ and $Xv_{i}=\sqrt{\lambda_{i}}u_{i}$, $X^{T}Xv_{i}=\lambda_{i} v_{i}$ for all $i=1,...,p$. We define $\tilde{\varepsilon}_{i}:= \varepsilon^{T}u_{i}$, $i=1,...,n$ and $\tilde{\beta}^{*}_{i}:={\beta^{*}}^{T}v_{i}$, $i=1,...,p$ the projections of $\varepsilon$ and $\beta^{*}$respectively onto the right and left eigenvectors of $X$.

We assume that $k\leqslant \mu+1$ where $\mu$ is the grade of $X^{T}Y$ with respect to $X^{T}X$ i.e the degree of the nonzero monic polynomial $P$ of lowest degree such that $P(X^{T}X)X^{T}Y=0$. The Krylov subspace $\mathcal{K}^{k}$ is of dimension $k$ if and only if $\mu\geqslant k-1$ in such a way that in this case we have $\textrm{dim}(\mathcal{K}^{k})=k$. 

We can notice that the maximal dimension of the Krylov subspace sequence is also linked to the number of non zero eigenvalues $\lambda_{i}$ for which $u_{i}^{T}Y \neq 0$ (see \cite{HE90}). These particular eigenvalues are called the releavant eigenvalues. If the number of releavant eigenvalues is $n$ then the maximal dimension of the Krylov subspaces sequence is also $n$ and for all $k\leq n$ the dimension of $\mathcal{K}^{k}$ is exactly $k$. In particular if $X^{T}Y=\sum_{l=1}^{k}\sqrt{\lambda_{i_{l}}}(u_{i_{l}}^{T}Y)v_{i_{l}}$ then the PLS iterations will terminate in at most $k$ iterations.
\subsection{Link with the regularization of inverse problems methods: a minimization problem over polynomials}
We recall that $\mathcal{P}_{k}$ is the set of the polynomials of degree less than $k$ and $\mathcal{P}_{k,1}$ the set of the polynomial of degree less than $k$ whose constant term equals one.
By combining formula (\ref{eq:esti_betabis}) which expresses the PLS estimator as a constrainted least square over Krylov subspace with the definition of $\mathcal{K}^{k}$ it is easy to show that the PLS estimator can also be expressed as the solution of a minimization problem over polynomials.

\begin{prop}
\label{prop:poly}
For $k\leq n$ we have
\begin{equation}
\label{eq:PLS_poly}
\hat{\beta}_{k}=\hat{P}_{k}(X^{T}X)X^{T}Y
\end{equation}
where $\hat{P}_{k}$ is a polynomial of degree less than $k-1$ which satisfies $$\Vert Y-X\hat{P}_{k}(X^{T}X)X^{T}Y\Vert^{2}=\underset{P \in \mathcal{P}_{k-1}}{\textrm{argmin}}\Vert Y-XP(X^{T}X)X^{T}Y\Vert^{2}$$
and 
\begin{equation}
\label{eq:residu-poly}\Vert Y-X\hat{\beta}_{k}\Vert^{2}=\Vert \hat{Q}_{k}(XX^{T})Y\Vert^{2}=\underset{Q\in \mathcal{P}_{k,1}}{\textrm{min}}\Vert  Q(XX^{T})Y\Vert^{2}
\end{equation}
where $\hat{Q}_{k}(t)=1-t\hat{P}_{k}(t)$ is a polynomial of degree less than $k$ and of constant term equals to one.
\end{prop}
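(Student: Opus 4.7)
The plan is to translate the Krylov-subspace description of $\hat\beta_K$ supplied by Proposition~\ref{prop:CLS} into the language of polynomials, and then convert the residual into a polynomial in $XX^T$ acting on $Y$ via a simple commutation identity. Because the Krylov subspace $\mathcal{K}^{k}(X^{T}X,X^{T}Y)$ is spanned by the vectors $(X^{T}X)^{j}X^{T}Y$ for $j=0,\dots,k-1$, every $\beta\in\mathcal{K}^{k}$ is of the form $\beta=P(X^{T}X)X^{T}Y$ with $P\in\mathcal{P}_{k-1}$, and conversely every such $P$ produces an element of $\mathcal{K}^{k}$. Hence the minimization of $\|Y-X\beta\|^{2}$ over $\mathcal{K}^{k}$ is equivalent to the minimization of $\|Y-XP(X^{T}X)X^{T}Y\|^{2}$ over $P\in\mathcal{P}_{k-1}$. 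This gives (\ref{eq:PLS_poly}) together with the first variational characterization once we denote the minimizer by $\hat P_{k}$.

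For the second identity I would rely on the key commutation relation
$$X\,(X^{T}X)^{j}X^{T}=(XX^{T})^{j+1},\qquad j\ge 0,$$
which is established by a one-line induction, or even more directly by expanding the SVD $X=UDV^{T}$. Extending this to any polynomial $P(t)=\sum_{j=0}^{k-1} c_{j}t^{j}$ yields $XP(X^{T}X)X^{T}=XX^{T}\,P(XX^{T})$. Applying this to the residual gives
$$Y-X\hat P_{k}(X^{T}X)X^{T}Y=\bigl(I-XX^{T}\hat P_{k}(XX^{T})\bigr)Y=\hat Q_{k}(XX^{T})Y,$$
where $\hat Q_{k}(t):=1-t\hat P_{k}(t)$ has degree at most $k$ and constant term $1$, i.e.\ $\hat Q_{k}\in\mathcal{P}_{k,1}$.

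It remains to check that the minimization in (\ref{eq:residu-poly}) is indeed over all of $\mathcal{P}_{k,1}$. This is because the map $P\in\mathcal{P}_{k-1}\mapsto Q(t)=1-tP(t)\in\mathcal{P}_{k,1}$ is a bijection (its inverse writes a polynomial $Q\in\mathcal{P}_{k,1}$ as $Q(t)=1-tP(t)$ with $P\in\mathcal{P}_{k-1}$ recovered from the Taylor expansion of $(1-Q(t))/t$ at $0$). Transporting the $\mathcal{P}_{k-1}$-minimization of $\|Y-XP(X^{T}X)X^{T}Y\|^{2}$ through this bijection and using the commutation identity above yields
$$\|Y-X\hat\beta_{k}\|^{2}=\min_{Q\in\mathcal{P}_{k,1}}\|Q(XX^{T})Y\|^{2},$$
as announced.

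The only non-trivial step is the commutation identity $XP(X^{T}X)X^{T}=XX^{T}P(XX^{T})$; everything else is just a reparametrization of the Krylov-subspace description. I expect no real obstacle, but care is needed with notation (the slight abuse ``$\mathrm{argmin}$'' versus ``$\min$'' in the statement), and one should record explicitly that the bijection $P\leftrightarrow Q$ does send $\mathcal{P}_{k-1}$ onto $\mathcal{P}_{k,1}$ so that the two minimization problems are genuinely equivalent.
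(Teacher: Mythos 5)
Your proof is correct and follows exactly the route the paper intends: the paper gives no explicit proof of this proposition, remarking only that it follows ``by combining formula (\ref{eq:esti_betabis})\dots with the definition of $\mathcal{K}^{k}$'', and your argument (reparametrizing the Krylov subspace by polynomials, the commutation identity $XP(X^{T}X)X^{T}=XX^{T}P(XX^{T})$, and the bijection $P\mapsto 1-tP(t)$ between $\mathcal{P}_{k-1}$ and $\mathcal{P}_{k,1}$) is precisely the omitted detail, correctly carried out.
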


Notice that for all $k\leq n$ we have $X\hat{\beta}_{k}=\hat{\Pi}_{k}Y$ where $\hat{\Pi}_{k}$ is the orthogonal projector onto the random space $\mathcal{K}^{k}(XX^{T}, XX^{T}Y)$ of dimension $k$. In particular for $k=n$, we have $X\hat{\beta}_{n}=Y$ and $\Vert Y-X\hat{\beta}_{n}\Vert^{2}=0$ because $\mathcal{K}^{n}(XX^{T}, XX^{T}Y)$ is of dimension $n$. In the following we will omit this trivial case.

Proposition \ref{prop:poly} shows that the PLS method is another regularization method for ill-posed inverse problem (see \cite{MR1408680}). In fact when the explanatory variables are highly correlated or when they outnumber the number of observations the regression model we consider is ill-posed. The idea behind PLS is to approximate the ill-posed problem by a family of nearby well-posed problem by seeking for regularization operator $\mathcal{R}_{\alpha}$ such that $\mathcal{R}_{\alpha}(X^{T}X)X^{T}Y$ close to $\beta^{*}$ where $\mathcal{R}_{\alpha}$ is under a polynomial form. In fact the polynomials $\hat{P}_{k}$ play the role of $\mathcal{R}_{\alpha}$ and $\alpha=k$ is the regularization parameter. We also refer to \cite{BLAN12} to take a more in depth look on statistical inverse problems and Conjugate Gradient because PLS is closely related to Conjugate Gradient with a statistical point of view.

The idea of considering the Krylov subspace and thus polynomial approximation is at the heart of the issue for PLS. We present below a result which gives a good reason to search for polynomial approximations. Indeed the theorem of Cayley-Hamilton tells us that we can represent the inverse of a nonsingular matrix $A$ in terms of the powers of $A$. It is no longer the case for a singular matrix because the inverse does not exist. But the idea behind PLS remains quite the same for non singular matrix. It consists of using Krylov subspaces to approximate the pseudo inverse as a polynomial in $A$. In fact according to (\ref{eq:PLS_poly}) the PLS estimator $\hat{\beta}_{k}$ is of the form $\hat{P}_{k}(X^{T}X)X^{T}Y$ where $\hat{P}_{k}$ is a polynomial of degree less than $k-1$ and thus consists in a kind of regularization of the inverse of $X^{T}X$. Notice that since $\textrm{dim}(\mathcal{K}^{k})=k$ the polynomial $\hat{P}_{k}$ is in fact of degree exactly $k-1$. If $X^{T}X$ is invertible then the PLS method generates a sequence of polynomial approximation of the inverse of $X^{T}X$ and when $k=n$ we recover the inverse of $X^{T}X$ exactly.

So PLS is also equivalent to finding an optimal polynomial $\hat{Q}_{k}$ of degree $k$ with $\hat{Q}_{k}(0)=1$ minimizing  $\Vert  Q(XX^{T})Y\Vert^{2}$. Notice that if there exists a polynomial $Q$ of degree $k$ with $Q(0)=1$ small on the spectrum of $XX^{T}$ then $\Vert Y-X\hat{\beta}_{k}\Vert^{2}$ will be small too. In particular if the eigenvalues are clustered into $k$ groups (i.e can be divided into $k$ groups whose diameter are very small) then $\Vert Y-X\hat{\beta}_{k}\Vert^{2}$ has a good chance to be small as well. The polynomial $\hat{Q}_{k}$ quantifies the quality of the approximation of the response $Y$ at the $k^{th}$ step. We call these polynomials the residuals. 

\subsection{Link with orthogonal polynomials}
\begin{flushleft}
In this subsection we first prove that the sequence of polynomials $\left(\hat{Q}_{k}\right)_{1\leqslant k <n} $ defined in  Proposition \ref{prop:poly}  is orthogonal with respect to a discrete measure denoted by $\hat{\mu}$. 
\end{flushleft}

\begin{prop}
$\hat{Q}_{1},\hat{Q}_{2},...,\hat{Q}_{n-1}$ is a sequence of orthonormal polynomials with respect to the measure 
$$d\hat{\mu}(\lambda)=\sum_{j=1}^{n}\lambda_{j}(u_{j}^{T}Y)^{2}\delta_{\lambda_{j}}.$$
\label{prop: poly-ortho}
\end{prop}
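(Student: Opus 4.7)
The substance of the claim is that, for $0\le l<k\le n-1$, one has
$$\int \hat Q_k(\lambda)\hat Q_l(\lambda)\,d\hat\mu(\lambda)\;=\;\sum_{j=1}^n \lambda_j(u_j^TY)^2\hat Q_k(\lambda_j)\hat Q_l(\lambda_j)\;=\;0,$$
and the natural route is to turn the first–order optimality conditions of the minimization problem in Proposition \ref{prop:poly} into moment conditions against $d\hat\mu$. So the plan is to start from the fact that $\hat\beta_k$ minimizes $\|Y-X\beta\|^2$ over the subspace $\mathcal{K}^k(X^TX,X^TY)$. The usual normal equations then give the orthogonality
$$\bigl\langle Y-X\hat\beta_k,\;Xv\bigr\rangle=0\qquad\text{for every }v\in\mathcal{K}^k(X^TX,X^TY),$$
equivalently $X^T(Y-X\hat\beta_k)\perp(X^TX)^iX^TY$ for $i=0,\dots,k-1$. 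Using $Y-X\hat\beta_k=\hat Q_k(XX^T)Y$ and the commutation identity $X^Tf(XX^T)=f(X^TX)X^T$, this rewrites as
$$\bigl\langle \hat Q_k(X^TX)X^TY,\;(X^TX)^iX^TY\bigr\rangle=0,\qquad i=0,\dots,k-1.$$

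Next I would diagonalize via the SVD $X=UDV^T$. Expanding $X^TY=\sum_{j=1}^n\sqrt{\lambda_j}(u_j^TY)v_j$ in the orthonormal basis $(v_j)$ and using $X^TXv_j=\lambda_j v_j$, each inner product collapses to a single sum
$$\bigl\langle \hat Q_k(X^TX)X^TY,(X^TX)^iX^TY\bigr\rangle=\sum_{j=1}^n \lambda_j^{i+1}(u_j^TY)^2\hat Q_k(\lambda_j)=\int \lambda^i\hat Q_k(\lambda)\,d\hat\mu(\lambda).$$
Hence the above orthogonality to the Krylov basis is exactly the vanishing of the first $k$ moments of $\hat Q_k$ against $\hat\mu$. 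Since $\hat Q_l\in\mathcal{P}_{k-1}$ for every $l<k$, writing $\hat Q_l$ in the monomial basis and using linearity yields $\int\hat Q_l\hat Q_k\,d\hat\mu=0$, which is the orthogonality asserted.

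For the normalization I would simply compute
$$\int \hat Q_k^2\,d\hat\mu=\sum_{j=1}^n\lambda_j\hat Q_k(\lambda_j)^2(u_j^TY)^2=\bigl\|X^T(Y-X\hat\beta_k)\bigr\|^2,$$
by the same SVD reduction; so each $\hat Q_k$ can be rescaled by this quantity to obtain an orthonormal family. (The statement as written presumably has this normalization in mind, since the minimization in Proposition \ref{prop:poly} only determines $\hat Q_k$ up to its constant-term normalization $\hat Q_k(0)=1$, not up to unit norm in $\hat\mu$.)

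The step I expect to be the trickiest is the bookkeeping in the passage from Krylov-orthogonality to $\hat\mu$-orthogonality: one has to notice that the extra factor $\lambda_j$ in the definition of $d\hat\mu$ appears precisely because $X^TY$ already carries a factor $\sqrt{\lambda_j}$ in each eigendirection, so squaring produces $\lambda_j(u_j^TY)^2$. Once this is in place, the degree count (the polynomial $\hat Q_k$ has degree exactly $k$, as noted after Proposition \ref{prop:poly}) guarantees that the family $(\hat Q_1,\dots,\hat Q_{n-1})$ has strictly increasing degrees, which together with the orthogonality just shown characterizes it as the sequence of orthogonal polynomials associated to $\hat\mu$.
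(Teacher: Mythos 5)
Your proposal is correct and follows essentially the same route as the paper: both arguments reduce the claim to the orthogonality of the residual $\hat{Q}_k(XX^T)Y$ to the Krylov subspace (the normal equations of the constrained least squares problem) and then diagonalize via the SVD, the factor $\lambda_j$ in $d\hat\mu$ arising exactly as you describe; your pairing of $\hat{Q}_k$ against monomials of degree $<k$ is just the mirror image of the paper's pairing of $\hat{Q}_k$ against $\hat{Q}_l$ with $l>k$ via the inclusion $\mathcal{K}^{k+1}\subset\mathcal{K}^{l}$. Your closing remark about the normalization is apt, since the paper's proof likewise only establishes orthogonality, not orthonormality.
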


The support of the measure $\hat{\mu}$ consists of the $(\lambda_{i})_{1\leq i \leq n}$ and the weights depend on $(\lambda_{i})_{1\leq i \leq n}$ and $(u_{j}^{T}Y)_{1\leq i \leq n}$. These last quantities capture both the variation in $X$ and the correlation between $X$ and $Y$.

\section{A new expression for the residuals in the eigenvectors direction}
\label{section:residual}
 \subsection{Main Result}
If the PLS properties are not completely understood it is partly because the solution is a non linear function of the data $Y$. PLS is an iterative method and therefore if we perturb $Y$ the perturbation propagates through the sequence of Krylov subspaces in a non linear way which makes difficult the explicit study of the PLS estimator.
 In this section we provide a new explicit and exact formulation of the residuals which clearly shows how the disturbance on the observations impacts on the residuals.

In this section a new and exact expression for $\hat{Q}_{k}(\lambda_{i})$ is proposed for all $k=1,...,n-1$ and all $i=1,...,n$.

\begin{theorem}
\label{theo: expression-det-2}
Let $k\leq n$ and $$I_{k}^{+}=\left\lbrace n\geq j_{1}>...>j_{k}\geq 1\right\rbrace.$$

We have
\begin{equation}
\label{eq:final-expression-residuals}
\hat{Q}_{k}(\lambda_{i})=\sum_{(j_{1},..,j_{k})\in I^{+}_{k}}\left[ 
\dfrac{\hat{p}_{j_{1}}^{2}...\hat{p}_{j_{k}}^{2}\lambda_{j_{1}}^{2}...\lambda_{j_{k}}^{2}V(\lambda_{j_{1}},...,\lambda_{j_{k}})^{2}}{\sum_{(j_{1},..,j_{k})\in I^{+}_{k}} \hat{p}_{j_{1}}^{2}...\hat{p}_{j_{k}}^{2}\lambda_{j_{1}}^{2}...\lambda_{j_{k}}^{2}V(\lambda_{j_{1}},...,\lambda_{j_{k}})^{2}}\right] \prod_{l=1}^{k}(1-\frac{\lambda_{i}}{\lambda_{j_{l}}}).
\end{equation}
where $\hat{p}_{i}:= p_{i}+ \tilde{\varepsilon}_{i} $ with $p_{i}:=(X\beta^{*})^{T}u_{i}=\sqrt{\lambda_{i}}\tilde{\beta}^{*}_{i}$ and $\tilde{\varepsilon}_{i}:= \varepsilon^{T}u_{i}$.
\end{theorem}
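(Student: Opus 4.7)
The plan is to recognize $\hat{Q}_k$ as the $k$-th orthogonal polynomial with respect to the discrete measure $d\hat{\mu}$ of Proposition \ref{prop: poly-ortho}, normalized by the constraint $\hat{Q}_k(0)=1$, and then invoke Heine's integral representation of orthogonal polynomials specialized to an atomic measure. Since $u_j^T Y = \hat{p}_j$, that measure has atoms $\lambda_j$ with weights $w_j := \lambda_j \hat{p}_j^2$. If $P_k$ denotes the monic $k$-th orthogonal polynomial for $d\hat{\mu}$, then by Proposition \ref{prop: poly-ortho} and the normalization, $\hat{Q}_k(x) = P_k(x)/P_k(0)$, so the task reduces to computing this ratio at $x=\lambda_i$.

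Heine's formula reads
$$P_k(x) = \frac{\displaystyle\int\!\cdots\!\int \prod_{l=1}^k (x-y_l)\, V(y_1,\ldots,y_k)^2\, d\hat{\mu}(y_1)\cdots d\hat{\mu}(y_k)}{\displaystyle\int\!\cdots\!\int V(y_1,\ldots,y_k)^2\, d\hat{\mu}(y_1)\cdots d\hat{\mu}(y_k)}.$$
When the integrals are expanded as $k$-fold sums, the factor $V^2$ vanishes whenever two indices coincide, so only tuples of distinct indices contribute; the integrand being symmetric, each unordered $k$-subset is counted $k!$ times, and these factorials cancel between numerator and denominator, leaving sums indexed exactly by $I_k^+$.

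The last step is algebraic: setting $x=\lambda_i$ and $x=0$ and dividing, one rewrites
$$\prod_{l=1}^k (\lambda_i-\lambda_{j_l}) = (-1)^k \lambda_{j_1}\cdots\lambda_{j_k} \prod_{l=1}^k\!\left(1-\frac{\lambda_i}{\lambda_{j_l}}\right),$$
so that the sign $(-1)^k$ coming from $P_k(0)$ cancels, and the product $\lambda_{j_1}\cdots\lambda_{j_k}$ merges with $w_{j_1}\cdots w_{j_k} = \lambda_{j_1}\cdots\lambda_{j_k}\,\hat{p}_{j_1}^2\cdots\hat{p}_{j_k}^2$ to yield precisely the factors $\hat{p}_{j_l}^2 \lambda_{j_l}^2$ of the claimed formula.

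The main delicate point is a clean justification of Heine's formula in this setting. I would give it via Cauchy--Binet: the polynomial on the right-hand side is patently monic of degree $k$, and its orthogonality to $1,\lambda,\ldots,\lambda^{k-1}$ with respect to $d\hat{\mu}$ follows because inserting a factor $y_0^j$ together with one extra integration produces the square of a Vandermonde-type determinant of size $k+1$ which, expanded along the added row, vanishes for $j\le k-1$. Uniqueness of $\hat{Q}_k$, hence of the identification $\hat{Q}_k=P_k/P_k(0)$, holds as soon as at least $k$ atoms of $\hat{\mu}$ carry positive mass, i.e.\ when the number of relevant eigenvalues is at least $k$; this is guaranteed in the regime $k \le n$ used in the theorem, the trivial case being handled in the remark following Proposition \ref{prop:poly}.
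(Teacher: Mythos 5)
Your proof is correct, and it reaches the formula by a genuinely different (though classically equivalent) route. The paper works from the moment system: orthogonality of $\hat{Q}_{k}$ to $1,x,\dots,x^{k-1}$ in $L^{2}(\hat{\mu})$ together with $\hat{Q}_{k}(0)=1$ gives a $k\times k$ linear system for the coefficients, Cramer's rule turns $\hat{Q}_{k}$ into a ratio of determinants in the moments $\hat{m}_{j}=\int x^{j}d\hat{\mu}$, and these determinants are then expanded by multilinearity over the atoms of $\hat{\mu}$ and symmetrized over permutations to produce the squared Vandermonde weights. You instead identify $\hat{Q}_{k}=P_{k}/P_{k}(0)$ with $P_{k}$ the monic orthogonal polynomial and invoke Heine's multiple-integral representation, specialized to the atomic measure with weights $\lambda_{j}\hat{p}_{j}^{2}$; the concluding algebra, $\prod_{l}(\lambda_{i}-\lambda_{j_{l}})=(-1)^{k}\lambda_{j_{1}}\cdots\lambda_{j_{k}}\prod_{l}(1-\lambda_{i}/\lambda_{j_{l}})$ with the $(-1)^{k}$ cancelling against $P_{k}(0)$, is the same in both. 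What your version buys is conceptual economy: the weights $\hat{w}_{j_{1},\dots,j_{k}}$ are recognized at once as the classical Heine weights of $d\hat{\mu}$, and the $k!$ cancellation between ordered and unordered tuples is handled cleanly. The price is that Heine's formula itself must be justified; your Andreief/Cauchy--Binet sketch is sound, though the phrase ``square of a Vandermonde-type determinant'' is slightly imprecise --- the relevant object is the antisymmetrization of $y_{0}^{j}V(y_{0},\dots,y_{k})V(y_{1},\dots,y_{k})$, a $(k+1)\times(k+1)$ generalized Vandermonde with exponent rows $j,0,1,\dots,k-1$, which indeed vanishes for $j\leq k-1$ by row repetition. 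You are also right, and in fact more explicit than the paper, that the identification $\hat{Q}_{k}=P_{k}/P_{k}(0)$ requires $P_{k}(0)\neq 0$ and uniqueness of the degree-$k$ orthogonal polynomial; both hold because the atoms are positive and because $\dim\mathcal{K}^{k}=k$ forces at least $k$ distinct relevant eigenvalues.
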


For all $k < n$ we recover that $\hat{Q}_{k}$ is a polynomial of degree $k$ and $\hat{Q}_{k}(0)=1$.
The expression of $\hat{Q}_{k}(\lambda_{i})$ given in Proposition \ref{prop: expression-det} depends explicitly on the observations noise and on the eigenelements of $X$ contrary to the expression provided in the paper of \cite{LIN00}. Formula (\ref{eq:final-expression-residuals}) is also valid for $k=n$ but in this case we recover that $\hat{Q}_{n}(\lambda_{i})=0$ for all $i=1,...,n$.

 Now assume that there are only $k$ distinct eigenvalues among the $n$ ones and denote by $\tilde{\lambda}_{1}$,...,$\tilde{\lambda}_{k}$  the different representatives. Then for all $i=1,...n$ formula (\ref{eq:final-expression-residuals}) implies
$$\hat{Q}_{k}(\lambda_{i})=\prod_{j=1}^{k}(1-\frac{\lambda_{i}}{\tilde{\lambda}_{j}})=0. $$
Thus the residuals along each eigenvectors equal zero at step $k$ if there are less than $k$ different non zero eigenvalues (notice that this is of course the case when $k=n$). Furthermore if we assume that there exists only $k$ eigenvectors denoted by $u_{\overline{j_{1}}},...,u_{\overline{j_{k}}}$ such that $\hat{p}_{\overline{j_{1}}}\neq 0$,...,$\hat{p}_{\overline{j_{k}}}\neq 0$ then formula (\ref{eq:final-expression-residuals}) becomes 
$$\hat{Q}_{k}(\lambda_{i})=\prod_{j=1}^{k}(1-\frac{\lambda_{i}}{\lambda_{\overline{j_{l}}}}).$$
Therefore for all $\lambda \in \left\lbrace  \lambda_{\overline{j_{1}}},...,\lambda_{\overline{j_{k}}}\right\rbrace $, 
$\hat{Q}_{k}(\lambda)=0$
 and thus we find 
$\Vert Y-X\hat{\beta}_{k}\Vert^{2}=\Vert \hat{Q}_{k}(XX^{T})Y\Vert^{2}=\sum_{i=1}^{n}\hat{Q}_{k}(\lambda_{i})^{2}(u_{i}^{T}Y)^{2}= 0.$

For all $((j_{1},...,j_{k})) \in I_{k}^{+}$, let $$\hat{w}_{j_{1},..,j_{k}}:=\dfrac{\hat{p}_{j_{1}}^{2}...\hat{p}_{j_{k}}^{2}\lambda_{j_{1}}^{2}...\lambda_{j_{k}}^{2}V(\lambda_{j_{1}},...,\lambda_{j_{k}})^{2}}{\sum_{(j_{1},..,j_{k})\in I^{+}_{k}} \hat{p}_{j_{1}}^{2}...\hat{p}_{j_{k}}^{2}\lambda_{j_{1}}^{2}...\lambda_{j_{k}}^{2}V(\lambda_{j_{1}},...,\lambda_{j_{k}})^{2}}$$
and $$g_{j_{1},..,j_{k}}(x)=\prod_{l=1}^{k}(1-\frac{x}{\lambda_{j_{l}}} ).$$ 
Notice that this last function is again a polynomial in $x$ of degree $k$ whose constant term is equal to one and is zero at $\lambda_{j_{1}},...,\lambda_{j_{k}}$ which are elements in the spectrum of $XX^{T}$. We have $$\hat{Q}_{k}(\lambda_{i})=\sum_{(j_{1},..,j_{k})\in I^{+}_{k}} \hat{w}_{(j_{1},..,j_{k})}g_{j_{1},..,j_{k}}(\lambda_{i}).$$
Besides, for all $(j_{1},...,j_{k}) \in I_{k}^{+}$, $0<\hat{w}_{(j_{1},..,j_{k})}\leq 1$ and $\sum_{(j_{1},..,j_{k})\in I^{+}_{k}}\hat{w}_{(j_{1},..,j_{k})}=1$. Thus the weights $(\hat{w}_{(j_{1},..,j_{k})})_{I^{+}_{k}}$ are probabilities.
Therefore $\hat{Q}_{k}(\lambda_{i})$ is the sum over all elements in $I_{k}^{+}$ of $g_{j_{1},..,j_{k}}(\lambda_{i})$ weighted by the probabilities $\hat{w}_{(j_{1},..,j_{k})}$. It is a kind of barycenter of all the polynomials in $\mathcal{P}_{k,1}$ whose roots are subsets of $\left\lbrace \lambda_{1},...,\lambda_{n}\right\rbrace $. The weights are not easy to interpret but they are even greater when the magnitude and the distance between the involved eigenvalues are large and the contribution of the response along the associated eigenvectors is important.
From formula (\ref{eq:final-expression-residuals}) we can state in a very large way that $$\mid\hat{Q}_{k}(\lambda_{i})\mid\leq \underset{I_{k}^{+}}{\textrm{max}}\left( \prod_{l=1}^{k}\left\lvert 1-\frac{\lambda_{i}}{\lambda_{j_{l}}}\right \rvert\right). $$
In particular if $ \lambda_{1}(1-\varepsilon) \leq\lambda_{i} \leq \lambda_{n}(1+\varepsilon)$ then $$\mid\hat{Q}_{k}(\lambda_{i})\mid\leq \varepsilon^{k}.$$

Here is an example of the residuals path with respect to the eigenvectors directions for $100$ nonzero eigenvalues which are distributed around $10$ different values. We also represent the residuals only for the extremal eigenvalues to better see the difference of behaviour.
\begin{figure}[H]

  \begin{minipage}[b]{0.5\linewidth}
   \centering
       \includegraphics[scale=0.4]{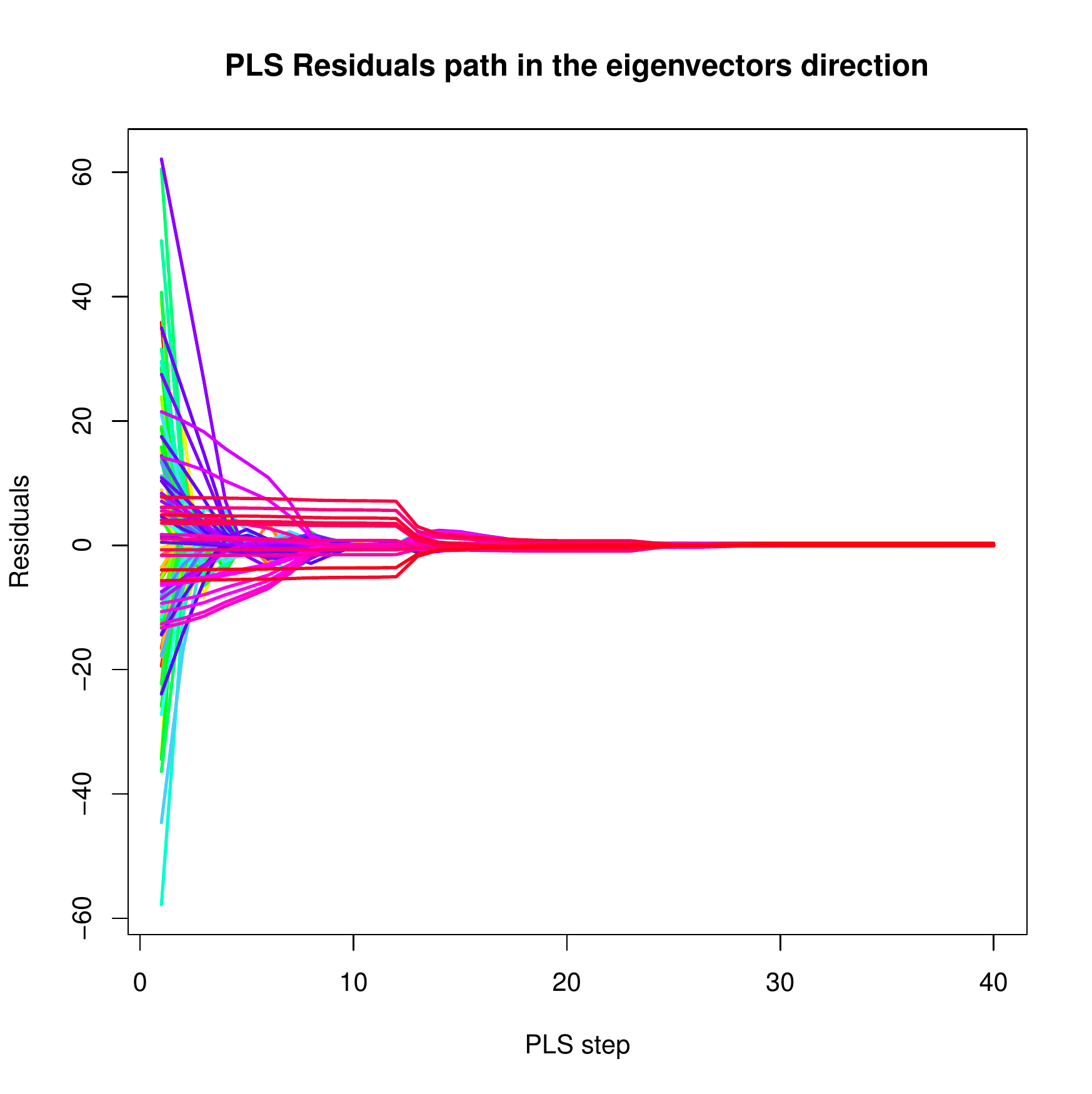} 
\caption{}
  \end{minipage}
\begin{minipage}[b]{0.5\linewidth}
   \centering
   \includegraphics[scale=0.4]{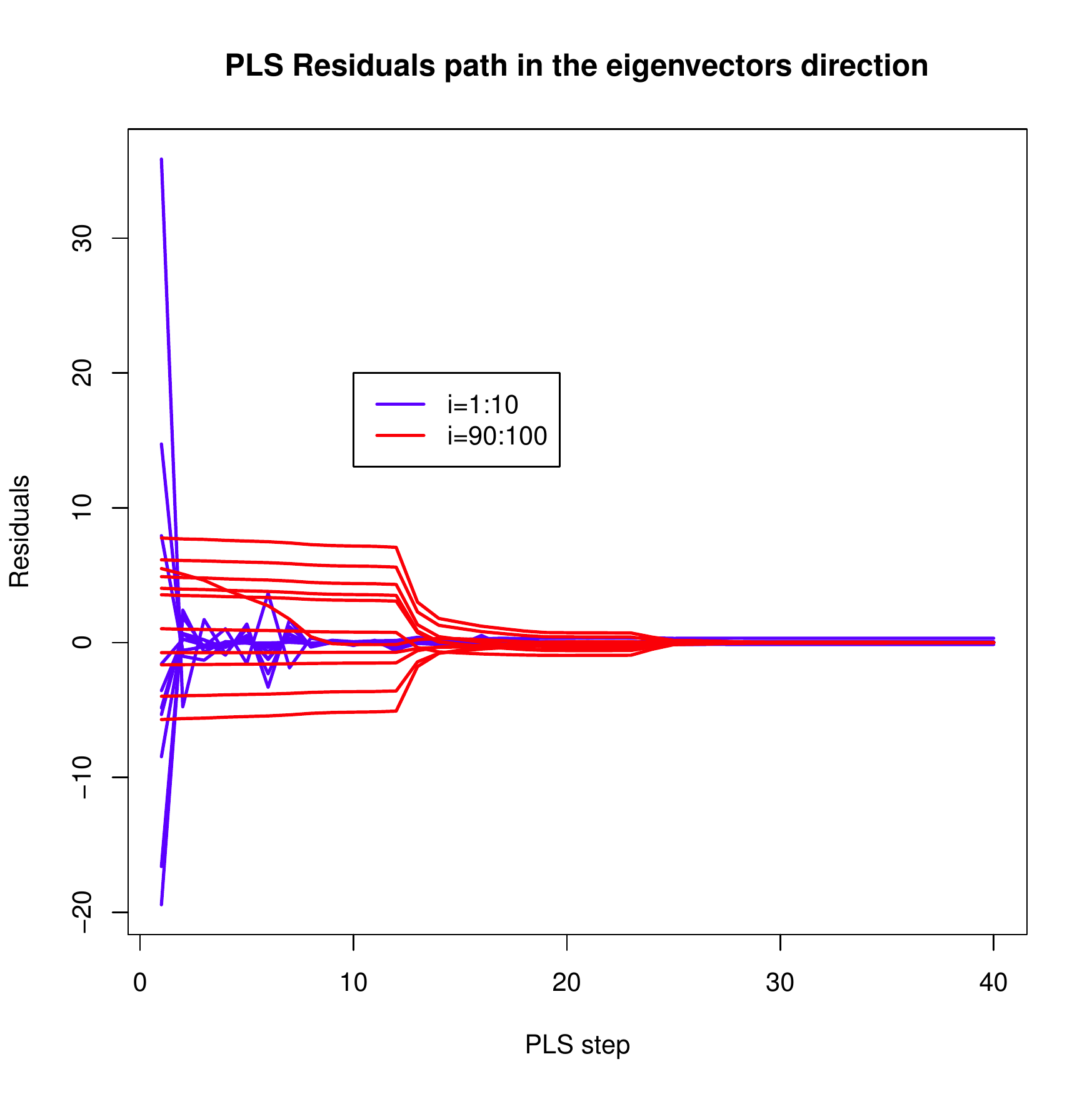} 
\caption{}
  \end{minipage}
\end{figure}

\begin{prop}
\label{prop:residual-closed}
Let $n$ and $k$ fixed and $i\in \llbracket 1,n \rrbracket $. 
If $\lambda_{j}= \lambda_{i}+\delta$ then 
$$ \mid \hat{Q}_{k}(\lambda_{i})-\hat{Q}_{k}(\lambda_{j})\mid\leq \delta\underset{I_{k}^{+}}{\textrm{max}}\left[ \sum_{l=1}^{k}\frac{1}{\lambda_{j_{l}}}\prod_{m \neq l}\left(1-\frac{\lambda_{i}}{\lambda_{j_{m}}} \right)\right]  +O(\delta^{2}).$$
 \end{prop}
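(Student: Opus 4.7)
The proof is essentially a first-order expansion of the closed form provided by Theorem \ref{theo: expression-det-2}, so I would treat it as a direct perturbation argument rather than trying to re-derive anything structural.

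The plan is first to rewrite the statement using the barycentric representation
$$\hat{Q}_{k}(\lambda)=\sum_{(j_{1},\dots,j_{k})\in I_{k}^{+}}\hat{w}_{j_{1},\dots,j_{k}}\prod_{l=1}^{k}\left(1-\frac{\lambda}{\lambda_{j_{l}}}\right),$$
emphasizing the crucial point that the weights $\hat{w}_{j_{1},\dots,j_{k}}$ depend only on the eigenvalues and on the $\hat{p}_{i}$'s, not on the argument $\lambda$. Hence, with $\lambda_{j}=\lambda_{i}+\delta$,
$$\hat{Q}_{k}(\lambda_{i})-\hat{Q}_{k}(\lambda_{j})=\sum_{(j_{1},\dots,j_{k})\in I_{k}^{+}}\hat{w}_{j_{1},\dots,j_{k}}\,\Delta_{j_{1},\dots,j_{k}}(\delta),$$
where $\Delta_{j_{1},\dots,j_{k}}(\delta):=\prod_{l=1}^{k}(1-\lambda_{i}/\lambda_{j_{l}})-\prod_{l=1}^{k}(1-(\lambda_{i}+\delta)/\lambda_{j_{l}})$.

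Next I would Taylor-expand each $\Delta_{j_{1},\dots,j_{k}}(\delta)$ around $\delta=0$. Writing $a_{l}=1-\lambda_{i}/\lambda_{j_{l}}$ and using the standard product-rule expansion $\prod_{l}(a_{l}-\delta/\lambda_{j_{l}})=\prod_{l}a_{l}-\delta\sum_{l}(1/\lambda_{j_{l}})\prod_{m\neq l}a_{m}+O(\delta^{2})$, one obtains
$$\Delta_{j_{1},\dots,j_{k}}(\delta)=\delta\sum_{l=1}^{k}\frac{1}{\lambda_{j_{l}}}\prod_{m\neq l}\left(1-\frac{\lambda_{i}}{\lambda_{j_{m}}}\right)+O(\delta^{2}).$$
Plugging this back, applying the triangle inequality, and using the fact that the $\hat{w}_{j_{1},\dots,j_{k}}$ are non-negative and sum to one yields
$$|\hat{Q}_{k}(\lambda_{i})-\hat{Q}_{k}(\lambda_{j})|\leq\delta\max_{I_{k}^{+}}\left|\sum_{l=1}^{k}\frac{1}{\lambda_{j_{l}}}\prod_{m\neq l}\left(1-\frac{\lambda_{i}}{\lambda_{j_{m}}}\right)\right|+O(\delta^{2}),$$
which is exactly the stated inequality.

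There is essentially no obstacle here beyond careful bookkeeping. The only point that deserves a one-line comment is that the $O(\delta^{2})$ remainder is uniform in the summation index: since $n$ and $k$ are fixed, $I_{k}^{+}$ is a finite set (of cardinality $\binom{n}{k}$), all $\lambda_{j_{l}}$ are bounded away from zero by $\lambda_{n}>0$, and the $a_{l}$ are bounded, so the second-order remainder in each $\Delta_{j_{1},\dots,j_{k}}(\delta)$ is controlled by a constant depending only on $n$, $k$, and the spectrum. Aggregating over $I_{k}^{+}$ with the probability weights preserves this bound.
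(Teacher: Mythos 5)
Your proof is correct and follows essentially the same route as the paper's: both rewrite $\hat{Q}_{k}$ in the barycentric form with the weights $\hat{w}_{j_{1},\dots,j_{k}}$, Taylor-expand the product $\prod_{l=1}^{k}\bigl(1-\tfrac{\lambda_{i}+\delta}{\lambda_{j_{l}}}\bigr)$ to first order in $\delta$, and conclude using the fact that the weights are nonnegative and sum to one. Your added remark on the uniformity of the $O(\delta^{2})$ remainder over the finite set $I_{k}^{+}$ is a small but welcome precision that the paper leaves implicit.
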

 \begin{proof}
 Let $n$ and $k$ fixed and $i\in \llbracket 1,n \rrbracket $. Assume that $\lambda_{j}= \lambda_{i}+\delta$. We have 
$$\hat{Q}_{k}(\lambda_{j})=\sum_{(j_{1},..,j_{k})\in I^{+}_{k}}\left[ \hat{w}_{(j_{1},..,j_{k})}\prod_{l=1}^{k}\left( 1-\frac{\lambda_{j}}{\lambda_{j_{l}}}\right) \right]=\sum_{(j_{1},..,j_{k})\in I^{+}_{k}}\left[ \hat{w}_{(j_{1},..,j_{k})}\prod_{l=1}^{k}\left( 1-\frac{\lambda_{i}+\delta}{\lambda_{j_{l}}}\right) \right] .$$
By expanding $\prod_{l=1}^{k}\left( 1-\frac{\lambda_{i}+\delta}{\lambda_{j_{l}}}\right)$ we get 
$$ \hat{Q}_{k}(\lambda_{i})=\hat{Q}_{k}(\lambda_{j})-\delta \sum_{(j_{1},..,j_{k})\in I^{+}_{k}} \hat{w}_{(j_{1},..,j_{k})}\left[ \sum_{l=1}^{k}\frac{1}{\lambda_{j_{k}}}\prod_{m \neq l}\left(1-\frac{\lambda_{i}}{\lambda_{j_{m}}} \right)\right] +O(\delta^{2}).$$
Then using the fact that $\sum_{(j_{1},..,j_{k})\in I^{+}_{k}} \hat{w}_{(j_{1},..,j_{k})}=1$ we deduce Proposition \ref{prop:residual-closed}.
 \end{proof}
Thus for nearby eigenvalues the filter factors are almost the same
Therefore if $\hat{Q}_{k}(\lambda_{i})$ is small then $\hat{Q}_{k}(\lambda_{j})$ will be small too if $\lambda_{j}$ is closed enough to $\lambda_{i}$.
In particular if the eigenvalues are clustered into $k$ groups and if the residuals associated to the center of the clusters are close to zero then all the residuals will be closed to zero too.

The expression of the residuals provided by Theorem \ref{theo: expression-det-2} will be very useful and central elsewhere in this paper to further explore the PLS method and prove new statistical results.

\subsection{Filter factors and shrinkage properties}
\label{subsection:shrink}
In this subsection we show that we recover some of the results first proved by \cite{BUTLER00} and \cite{LIN00} on the shrinkage properties of the PLS estimator and more particularly on its expansion or contraction in the eigenvectors directions using the expression of the residuals provided by Theorem \ref{theo: expression-det-2}.
We have (see \cite{LIN00})
$$\hat{\beta}_{k}=\sum_{i=1}^{n}f_{i}^{k}\frac{\hat{p}_{i}}{\sqrt{\lambda}_{i}}v_{i}$$
where the elements $f_i^{k}=1-\hat{Q}_{k}(\lambda_i)$ are called the filter factors. 
In their study Lingjaerde and Christophersen use the following implicit expression of $\hat{Q}_{k}$ 
$$\hat{Q}_{k}(t)=\dfrac{(\theta_{1}^{(k)}-t)...(\theta_{k}^{(k)}-t)}{\theta_{1}^{(k)}...\theta_{k}^{(k)}}$$
where $(\theta_{i}^{(k)})_{1\leq i\leq n}$ are the eigenvalues of $W_{k}({W_{k}}^{T}\Sigma W_{k}){W_{k}}^{T}$ (also  called the Ritz eigenvalues) to study the shrinkage properties of PLS. \cite{LIN00} showed that all the PLS shrinkage factors are not in $\left[0,1 \right]$ and can be larger than one. They even proved more precisely that the filter factors oscillate between below and above one (depending on the parity of the index of the factors). We recover these results from our expression of the residuals  provided in Theorem \ref{theo: expression-det-2}
$$\hat{Q}_{k}(\lambda_{i})=
\sum_{(j_{1},..,j_{k})\in I^{+}_{k}}\left[\hat{w}_{(j_{1},..,j_{k})} \prod_{l=1}^{k}(1-\frac{\lambda_{i}}{\lambda_{j_{l}}})\right],$$
where we recall that $\hat{w}_{(j_{1},..,j_{k})}:= \dfrac{\hat{p}_{j_{1}}^{2}...\hat{p}_{j_{k}}^{2}\lambda_{j_{1}}^{2}...\lambda_{j_{k}}^{2}V(\lambda_{j_{1}},...,\lambda_{j_{k}})^{2}}{\sum_{(j_{1},..,j_{k})\in I^{+}_{k}} \hat{p}_{j_{1}}^{2}...\hat{p}_{j_{k}}^{2}\lambda_{j_{1}}^{2}...\lambda_{j_{k}}^{2}V(\lambda_{j_{1}},...,\lambda_{j_{k}})^{2}}$.
From this formula we deduce $$f_{i}^{k}=\sum_{(j_{1},..,j_{k})\in I^{+}_{k}} \hat{w}_{(j_{1},..,j_{k})}\left[ 1-\prod_{l=1}^{k}(1-\frac{\lambda_{i}}{\lambda_{j_{l}}}) \right].$$ 
Notice that the filter factors are completely and explicitely determined by the spectrum and the eigenvectors of $X^{T}X$.

If $k<n$ and $i=n$ then $0<\prod_{l=1}^{k}(1-\frac{\lambda_{n}}{\lambda_{j_{l}}})<1$ and from $$\sum_{(j_{1},..,j_{k})\in I^{+}_{k}}\hat{w}_{(j_{1},..,j_{k})}=1$$ we conclude that $0<f_n^{k}<1$.

If $k<n$ and $i=1$ then 	
\begin{equation}
\lbrace 
\begin{array}{ccc}
\prod_{l=1}^{k}(1-\frac{\lambda_{1}}{\lambda_{j_{l}}})<0 & \mbox{if} & k \: \mbox{is odd} \\
\prod_{l=1}^{k}(1-\frac{\lambda_{1}}{\lambda_{j_{l}}})>0 & \mbox{if} & k \: \mbox{is even}
\end{array}
\end{equation}
and
\begin{equation}
\lbrace
\begin{array}{ccc}
f_1^{k}>1 & \mbox{if} & k \: \mbox{is odd} \\
f_1^{k}<1 & \mbox{if} & k \: \mbox{is even}.
\end{array}
\end{equation}
For the other filter factors we can have $f_i^{k}\leq 1$ or $f_i^{k}\geq 1$ (depending on the distribution of the spectrum) contrary to the PCR  or Ridge filter factors which always lies in $\left[0,1 \right] $. Therefore PLS shrinks in some directions and expands in others. However the PLS estimator is considered as a shrinkage estimator because $ \Vert \hat{\beta}_{k}^{PLS}\Vert\leqslant\Vert \hat{\beta}_{OLS}\Vert $ (see \cite{GOU96}).

We also recover Theorem 7 of \cite{LIN00}. Indeed if we have $\lambda_{i}< \lambda_{n}(1+\sqrt{\epsilon})$ then a straightforward calculation using formula (\ref{eq:final-expression-residuals}) leads to 
$f_i^{k}< 1+\epsilon$.

\section{Bounds for the empirical risk and prediction error}
\label{section:error}
In this section we further explore the statistical properties of PLS. For this, we investigate the accuracy of PLS through the study of the empirical risk and the least square error of prediction which are two criteria commonly used for assessing the quality of an estimator. 

From now, on we assume that the $(\varepsilon_{i})_{1\leq i \leq n}$ are i.i.d centered random variables with commmon variance $\sigma^{2}$ and for simplicity we also assume that the observations on the $X$ variables are centered and normalized, that is $\frac{1}{n}\sum_{i=1}^{n}X_{ij}=0$ and $\frac{1}{n}\sum_{i=1}^{n}X_{ij}^{2}=1$.

\subsection{Empirical risk}
The following proposition provides an upper bound for the MSE (mean square error). The MSE quantifies the fit of the model to the data set used. 
\begin{prop}
\label{prop:empirical-risk}
We have for $k<n$
\begin{equation}
\mathbb{E}\left[  \frac{1}{n}\Vert Y-X\hat{\beta}_{k}\Vert^{2}\right] \leqslant \left[  \frac{1}{n}\left(\frac{\sqrt{C(X^{T}X)}-1}{\sqrt{C(X^{T}X)}+1} \right) ^{2k}\parallel X\beta^{*}\parallel^{2}\right] \left[ 1 + \dfrac{n\sigma^{2}}{\parallel X\beta^{*}\parallel^{2}}\right]
\label{eq:empirical risk}
\end{equation}
where $C(X^{T}X)=\frac{\lambda_{1}}{\lambda_{n}}$ is the ratio of the two extreme non zero eigenvalues of $X^{T}X$. 

Obviously, if $k=n$, $$\mathbb{E}\left[  \frac{1}{n}\Vert Y-X\hat{\beta}_{n}\Vert^{2}\right]=0.$$
\end{prop}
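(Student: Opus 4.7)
The plan is to combine the polynomial characterization of the residual from Proposition \ref{prop:poly} with the classical Chebyshev minimax bound, and then to take expectations. Starting from
\[
\Vert Y-X\hat{\beta}_{k}\Vert^{2}=\min_{Q\in\mathcal{P}_{k,1}}\Vert Q(XX^{T})Y\Vert^{2},
\]
the key idea is that one may upper bound this minimum by evaluating the right-hand side at any fixed, data-independent polynomial $Q_k^{*}\in\mathcal{P}_{k,1}$. Decomposing $Y$ in the orthonormal eigenbasis $(u_i)_{1\leq i\leq n}$ of $\mathbb{R}^n$ then yields
\[
\Vert Q_k^{*}(XX^{T})Y\Vert^{2}=\sum_{i=1}^{n} Q_k^{*}(\lambda_i)^{2}(u_i^{T}Y)^{2} \leq \max_{1\leq i\leq n}Q_k^{*}(\lambda_i)^{2}\cdot \Vert Y\Vert^{2},
\]
using that $\sum_{i=1}^n (u_i^T Y)^2=\Vert Y\Vert^2$.

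I would take $Q_k^{*}$ to be the Chebyshev polynomial of the first kind, shifted and rescaled to the interval $[\lambda_n,\lambda_1]$ and normalized so that $Q_k^{*}(0)=1$. Its classical minimax property delivers the decay
\[
\max_{x\in[\lambda_n,\lambda_1]}\lvert Q_k^{*}(x)\rvert^{2} \leq \left(\frac{\sqrt{C(X^T X)}-1}{\sqrt{C(X^T X)}+1}\right)^{2k},
\]
with $C(X^T X)=\lambda_1/\lambda_n$. Taking the expectation of the preceding inequality and using $\mathbb{E}\Vert Y\Vert^{2} = \Vert X\beta^{*}\Vert^{2} + n\sigma^{2}$, which follows directly from $Y=X\beta^{*}+\varepsilon$, $\mathbb{E}\varepsilon=0$ and $\mathbb{E}\Vert\varepsilon\Vert^2=n\sigma^{2}$, I obtain
\[
\mathbb{E}\Vert Y-X\hat{\beta}_{k}\Vert^{2} \leq \left(\frac{\sqrt{C(X^T X)}-1}{\sqrt{C(X^T X)}+1}\right)^{2k}\bigl(\Vert X\beta^{*}\Vert^{2}+n\sigma^{2}\bigr).
\]
Dividing by $n$ and factoring $\Vert X\beta^{*}\Vert^{2}$ out of the parentheses then produces exactly (\ref{eq:empirical risk}).

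The only genuinely non-routine ingredient is the Chebyshev minimax bound on $\mathcal{P}_{k,1}$ over $[\lambda_n,\lambda_1]$. The conceptual point worth emphasising is the decoupling step: $\hat{Q}_k$ depends on $Y$ in a highly non-linear way through the Krylov subspace, which makes a direct expectation calculation intractable. Replacing $\hat{Q}_k$ by the deterministic near-optimizer $Q_k^{*}$ isolates all the randomness in the single scalar $\Vert Y\Vert^{2}$, whose first moment is elementary.
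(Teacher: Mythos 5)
Your proposal is correct and follows essentially the same route as the paper: bound the minimum over $\mathcal{P}_{k,1}$ by the shifted--rescaled Chebyshev polynomial on $[\lambda_n,\lambda_1]$, pull out $\max_i Q_k^*(\lambda_i)^2$, and use $\mathbb{E}\Vert Y\Vert^2=\Vert X\beta^*\Vert^2+n\sigma^2$ from the centering of $\varepsilon$. The only cosmetic difference is that you instantiate the fixed Chebyshev competitor explicitly, whereas the paper writes the same step as $\min_{Q}\max_{\lambda\in[\lambda_n,\lambda_1]}\lvert Q(\lambda)\rvert$ and invokes its Proposition \ref{theo:chebychev}.
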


The first factor in equation (\ref{eq:empirical risk}) represents the error due only to the regularization if no noise (projection of $X\beta^{*}$ onto the Krylov subspace $\mathcal{K}^{k}(X^{T}X,X^{T}X\beta^{*})$  and in the second factor $\dfrac{n\sigma^{2}}{\parallel X\beta^{*}\parallel^{2}}$ represents the inverse of the signal to noise ratio. 
We can notice that the upper bound relies on $\parallel X\beta^{*}\parallel^{2} = \sum_{i=1}^n \lambda_i \tilde{\beta}_i^2$. This term links the regularity of $\beta^{*}$ with the decay of the eigenvalues of $X^TX$. It thus can be seen as a Source Condition, see for instance in \cite{MR1408680}. We can state a result similar to the one of Proposition \ref{prop:empirical-risk} replacing $\Vert. \Vert_{2}$ by $\Vert .\Vert_{p}$, $p\in \mathbb{N}^{*}$.

We can notice that the convergence of the empirical risk  is associated with upper bounds derived using scaled and shifted Chebyshev polynomials. In fact the key of the proof of Proposition \ref{prop:empirical-risk} is essentialy based on the following proposition
\begin{prop}\cite{SAAD92}\\
\label{theo:chebychev}
Let $\left[\alpha,\beta \right] $ be a non empty interval in $\mathbb{R}$ and let $\gamma$ be any scalar such with $\gamma \notin \left]\alpha,\beta \right[ $. We define
 $\mathcal{E}_{k}:=\left\lbrace  \textrm{P polynomial of degree} \:k  \:\textrm{with}\: P(\gamma)=1 \right\rbrace $.
 
Then the minimum $\underset{P\in \mathcal{E}_{k}}{\rm{min}}\underset{t \in \left[\alpha,\beta \right] }{\rm{max}}\vert P(t)\vert$ is reached by the polynomial 
$$\hat{C}_{k}(t)= \dfrac{C_{k}(1+2\frac{t-\beta}{\beta-\alpha})}{C_{k}(1+2\frac{\gamma-\beta}{\beta-\alpha})},$$
where $C_{k}$ is the $k^{th}$ Chebychev polynomial i.e., for $x\in \left[-1,1 \right]  $, $$C_{k}(x)=\frac{1}{2}\left((x-\sqrt{x^{2}-1})^{k}+(x+\sqrt{x^{2}-1})^{k} \right) .$$
 
The maximum of $C_{k}$ for $x \in \left[ -1,1\right] $ is 1 and 
$$\underset{ P\in \mathcal{E}_{k} }{\rm{min}}\underset{t \in \left[\alpha,\beta \right] }{\rm{max}}\vert P(t)\vert=\dfrac{1}{\vert C_{k}(1+2\frac{\gamma-\beta}{\beta-\alpha})\vert}=\dfrac{1}{\vert C_{k}(2\frac{\gamma-\mu}{\beta-\alpha})\vert}$$
with $\mu=\frac{\alpha+\beta}{2}$.
\end{prop}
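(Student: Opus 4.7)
The plan is to reduce to the classical Chebyshev extremal problem on $[-1,1]$ via an affine change of variable, exhibit the scaled Chebyshev polynomial as the candidate extremizer, and then prove optimality through the equioscillation argument.

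First, I would introduce the affine bijection $\varphi:[\alpha,\beta]\to[-1,1]$ defined by $\varphi(t)=1+2\frac{t-\beta}{\beta-\alpha}$, sending $\alpha\mapsto -1$, $\beta\mapsto 1$, and $\gamma\mapsto\gamma':=1+2\frac{\gamma-\beta}{\beta-\alpha}$. The hypothesis $\gamma\notin(\alpha,\beta)$ translates to $\gamma'\notin(-1,1)$, in particular $|\gamma'|\geq 1$ and hence $|C_k(\gamma')|\geq 1$, so the quantities below make sense. Pulling back via $\varphi$ gives a bijection between $\mathcal{E}_k$ and the set of degree-$k$ polynomials $\tilde P$ with $\tilde P(\gamma')=1$, preserving the relevant sup-norms, so the problem reduces to minimizing $\max_{[-1,1]}|\tilde P|$ over such $\tilde P$.

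The natural candidate is $\tilde C_k(s):=C_k(s)/C_k(\gamma')$. Since $|C_k|\leq 1$ on $[-1,1]$ with equality attained at the $k+1$ Chebyshev points $s_j=\cos(j\pi/k)$, $j=0,\ldots,k$, one immediately gets $\max_{[-1,1]}|\tilde C_k|=1/|C_k(\gamma')|$. Translating back through $\varphi^{-1}$ yields exactly the polynomial $\hat C_k$ in the statement and the claimed value of the minimum.

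The main step, which I expect to be the delicate one, is optimality. Suppose some competitor $\tilde Q$ of degree $k$ with $\tilde Q(\gamma')=1$ satisfied $\max_{[-1,1]}|\tilde Q|<1/|C_k(\gamma')|$. At each Chebyshev point $s_j$, $\tilde C_k(s_j)=\pm 1/C_k(\gamma')$ with alternating sign while $|\tilde Q(s_j)|$ is strictly smaller, so $\tilde C_k-\tilde Q$ alternates sign $k+1$ times on $[-1,1]$ and thus admits at least $k$ roots in $(-1,1)$. Adjoining the extra root at $\gamma'\notin(-1,1)$ coming from the common constraint produces $k+1$ distinct roots for a polynomial of degree at most $k$, forcing $\tilde C_k\equiv\tilde Q$ and contradicting the strict inequality. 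A small amount of bookkeeping is needed in the edge case $|\gamma'|=1$ (that is, $\gamma\in\{\alpha,\beta\}$), when the root at $\gamma'$ coincides with a Chebyshev extremum; here the minimum value $1=1/|C_k(\gamma')|$ is read off directly from $\max_{[-1,1]}|\tilde P|\geq|\tilde P(\gamma')|=1$. Finally, the closed form $C_k(x)=\tfrac{1}{2}\bigl((x-\sqrt{x^2-1})^k+(x+\sqrt{x^2-1})^k\bigr)$ follows from $C_k(\cos\theta)=\cos k\theta$ continued to $|x|\geq 1$ via $x=\cosh\theta$ and de Moivre's formula, or by checking that both sides obey the three-term recurrence $C_{k+1}=2xC_k-C_{k-1}$ with $C_0=1$, $C_1(x)=x$.
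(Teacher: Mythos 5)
The paper does not prove this proposition at all: it is imported verbatim from \cite{SAAD92} as a known result, so there is no in-paper argument to compare yours against. Your proof is correct and is the standard one (and essentially the one found in Saad's book): the affine reduction to $[-1,1]$, the computation of the sup-norm of the normalized Chebyshev polynomial from $|C_k|\le 1$ with equality at the $k+1$ alternation points $\cos(j\pi/k)$, and the optimality argument by counting the $k$ sign-change roots of $\hat C_k-\tilde Q$ in $(-1,1)$ plus the common root at $\gamma'$ to force $k+1$ roots of a degree-$k$ polynomial. Your separate treatment of the boundary case $\gamma\in\{\alpha,\beta\}$, where the alternation count would otherwise double-count the root at $\gamma'$, is the right piece of bookkeeping, and the derivation of the closed form of $C_k$ via $x=\cosh\theta$ or the three-term recurrence is fine. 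No gaps.
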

 
 Notice that the convergence rate of the empirical risk is exponential in $k$  with respect to the ratio between the maximum and the minimum of the non zero eigenvalues. In fact $\left \lvert\frac{\sqrt{C(X^{T}X)}-1}{\sqrt{C(X^{T}X)}+1} \right \rvert< 1$ and is equal to zero if and only if all the eigenvalues are the same.  Therefore the closer to one is the condition number $C(X^{T}X)$ the faster is the decrease of the empirical risk with respect to $k$, in an exponential way while it turns polynomial for Ridge Regression for instance.
 
Figure \ref{figure12} represents the empirical risk for different values of the level of noise.
  \begin{figure}[H]
  \centering
   \includegraphics[scale=0.5]{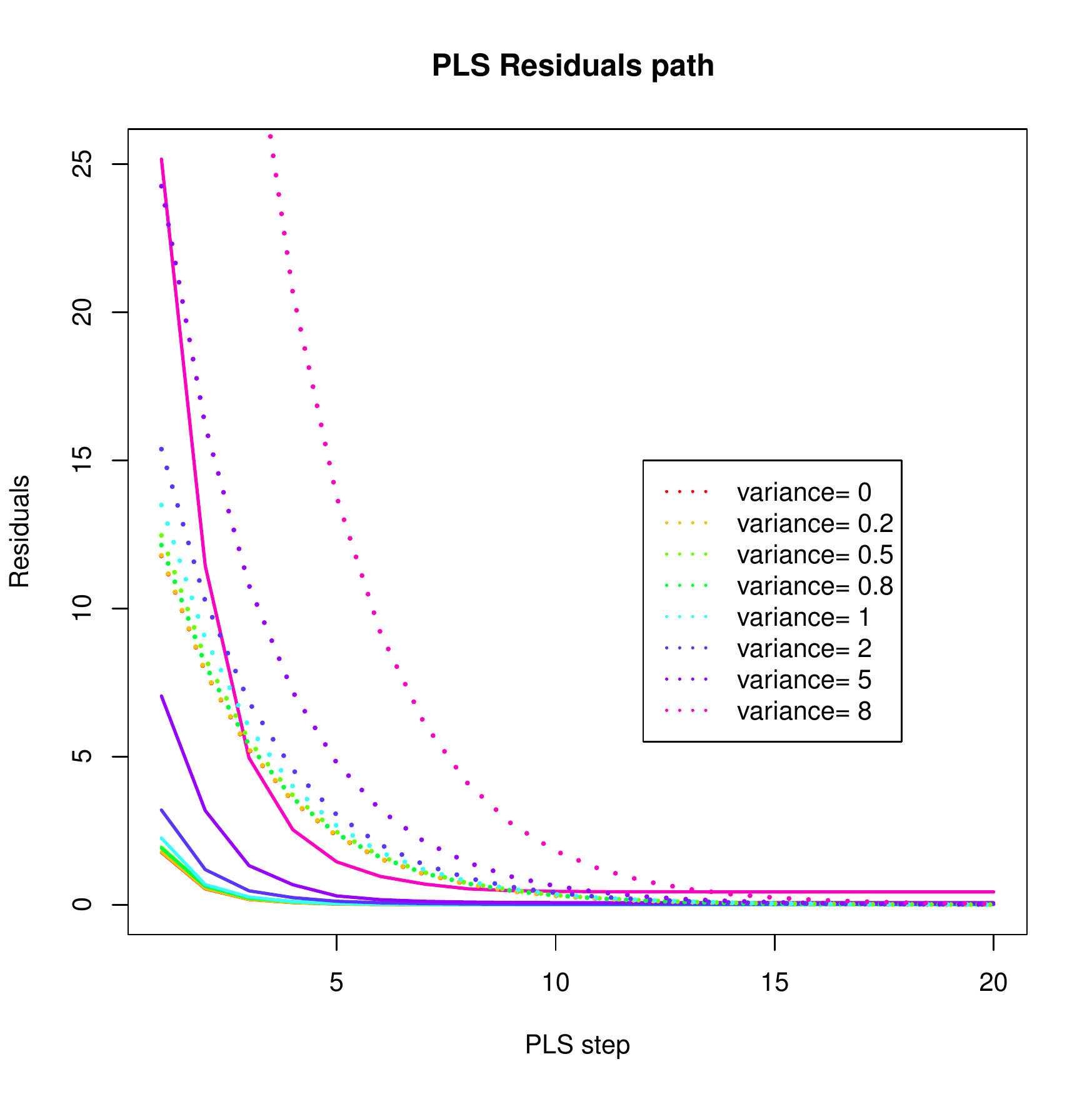} 
   \caption{}
   \label{figure12}
\end{figure}

A way to improve the method could be the use of preconditioners i.e. of a matrix $M$ used to convert the problem $X^{T}Y=X^{T}X\beta^{*}+X^{T}\varepsilon$ into another equivalent problem i.e. into $M^{-1}X^{T}X\beta=M^{-1}X^{T}(Y-\varepsilon)$ in such a way that it increases the rate of convergence.

As noticed below if there are only $k$ distincts eigenvalues or if the contribution of $Y$ is only non zero along $k$ eigenvectors then we also have 
$\mathbb{E}\left[  \frac{1}{n}\Vert Y-X\hat{\beta}_{k}\Vert^{2}\right]=0.$
This is a straigthforward consequence of equation  (\ref{eq:residu-poly}) in Proposition \ref{prop:poly}, taking $Q(x)=\prod_{i=1}^{k}\left( 1-\frac{x}{{\overline{\lambda}_{i}}}\right) $ where $({\overline{\lambda}_{i}})_{1\leq i\leq k}$ are the representatives of respectively the different non zero eigenvalues and the eigenvalues associated to a non zero contribution of the response to the associated eigenvectors.
In the same way the empirical risk will be very small at step $k$ if the eigenvalues are clustered into $k$ groups. 

To illustrate this particular behaviour of the PLS estimator we have performed some simulations.
The data sets are simulated according to model (\ref{eq:regression-model}) with $n=p=100$. The best latent components are choosen using the function \texttt{pls.regression.cv}.
For the first simulation below we consider that the eigenvalues are partitionned into $2$ clusters and for the second one that the eigenvalues are partitionned into $10$ clusters.

 \begin{figure}[H]
 \begin{minipage}[b]{0.5\linewidth}
   \centering
   \label{Figure5}
       \includegraphics[scale=0.4]{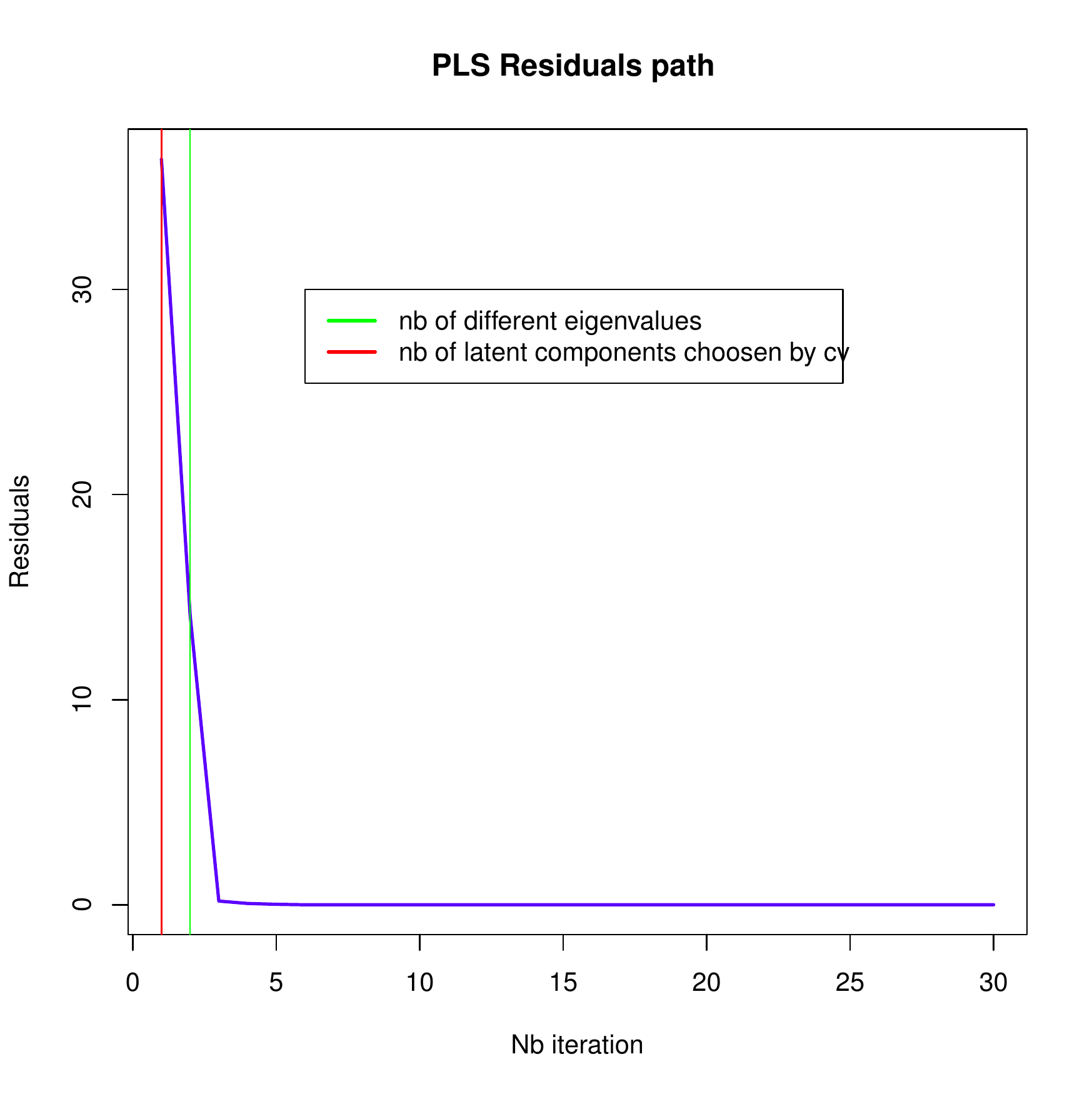} 
\caption{}
  \end{minipage}
\begin{minipage}[b]{0.5\linewidth}
   \centering
   \includegraphics[scale=0.4]{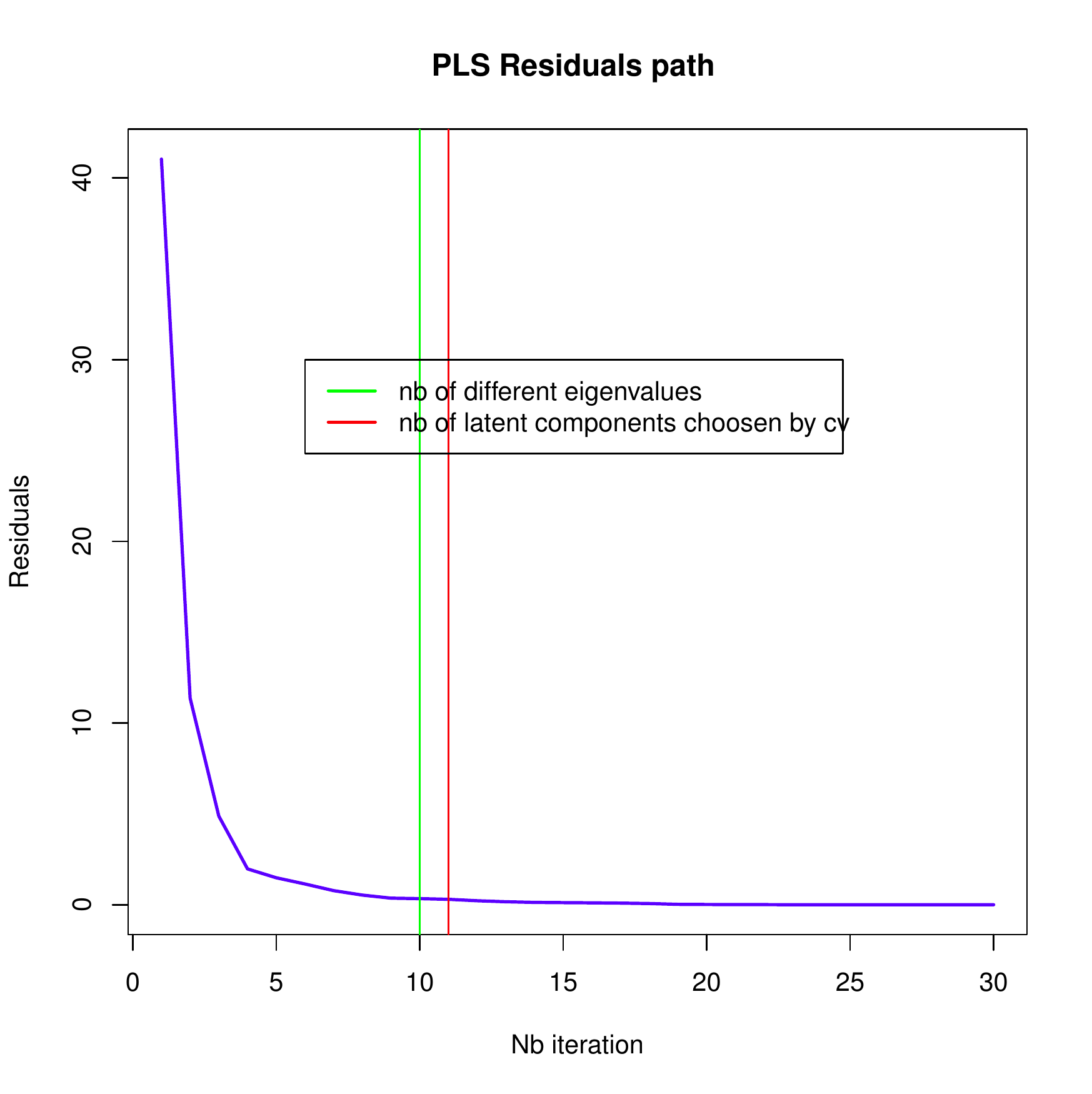} 
\caption{}
\label{Figure4}
  \end{minipage}
\end{figure}

 For the next three models we consider that $c$ eigenvalues are between 2 and 20 and all the others very close to zero (between 0.1 and 0.5) with $c$ respectively equals to $5,10$ and $15$.
The residuals $\parallel Y-X\hat{\beta}_{k}\parallel^{2}$ are plotted for different values of $k$.
\begin{figure}[H]

  \begin{minipage}[b]{0.5\linewidth}
   \centering
       \includegraphics[scale=0.4]{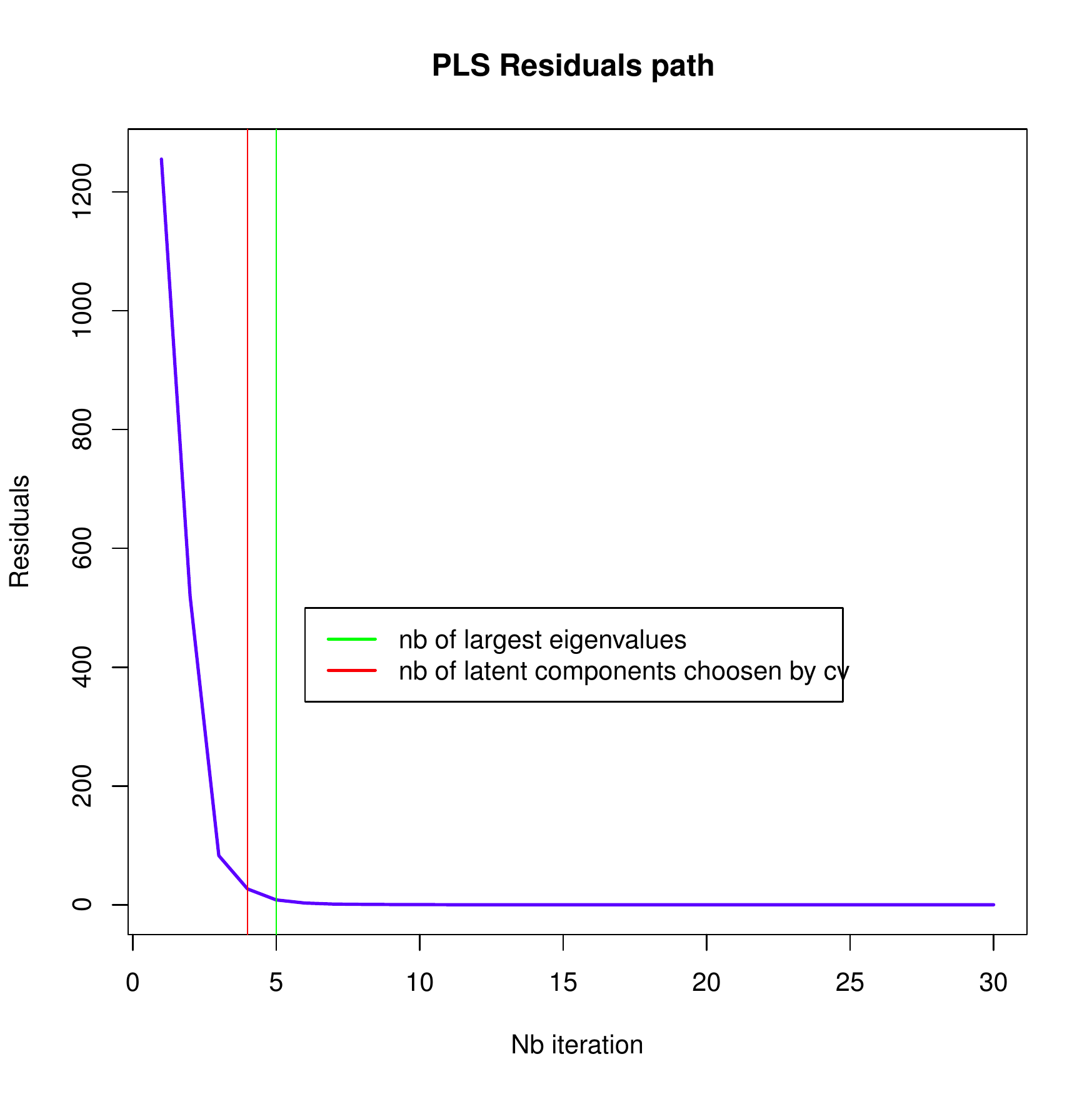} 
\caption{}
  \end{minipage}
\begin{minipage}[b]{0.5\linewidth}
   \centering
   \includegraphics[scale=0.4]{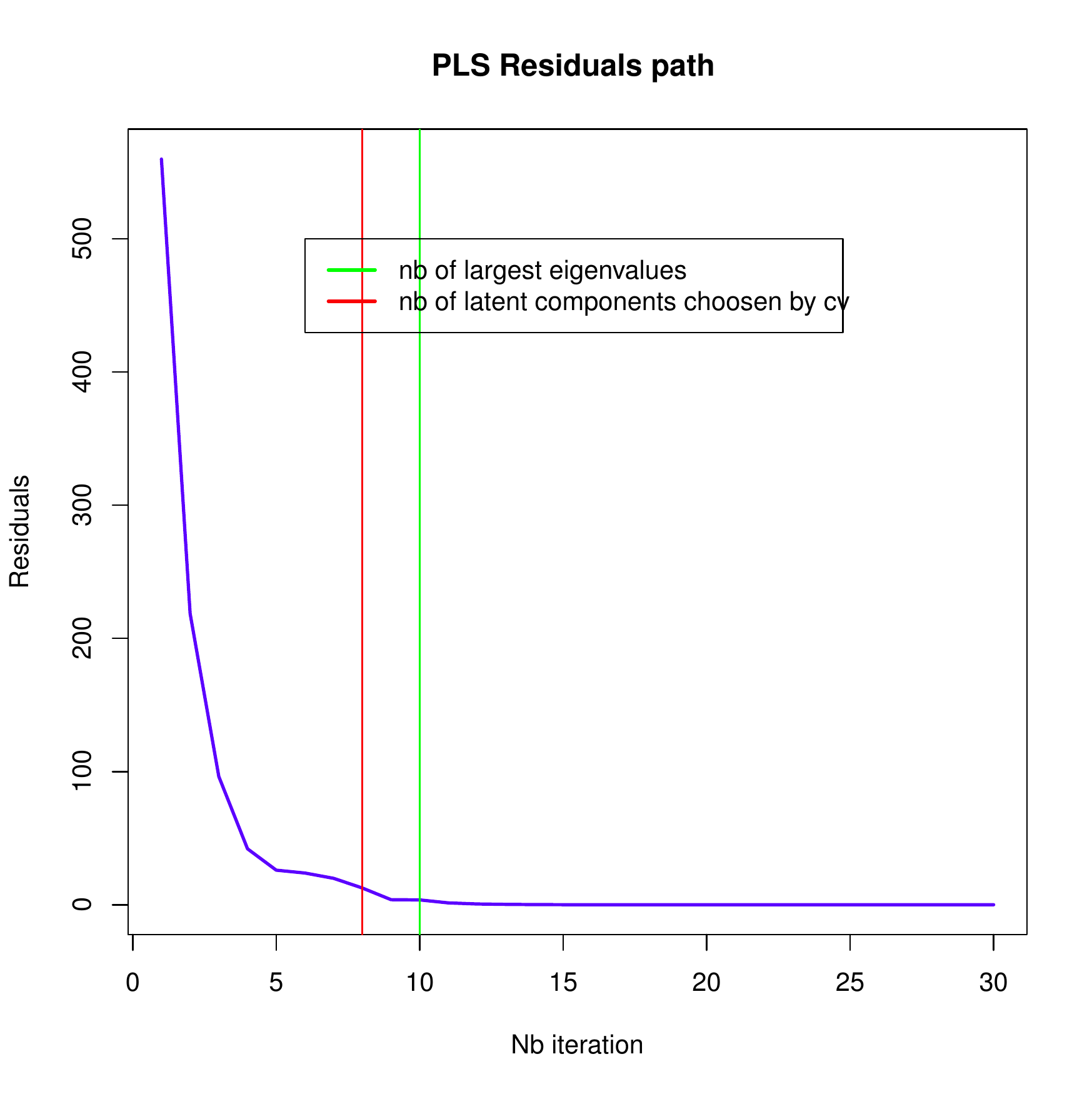} 
\caption{}
  \end{minipage}

\end{figure}

\begin{figure}[H]

\begin{minipage}[b]{0.10\linewidth}
 \centering
   \includegraphics[scale=0.4]{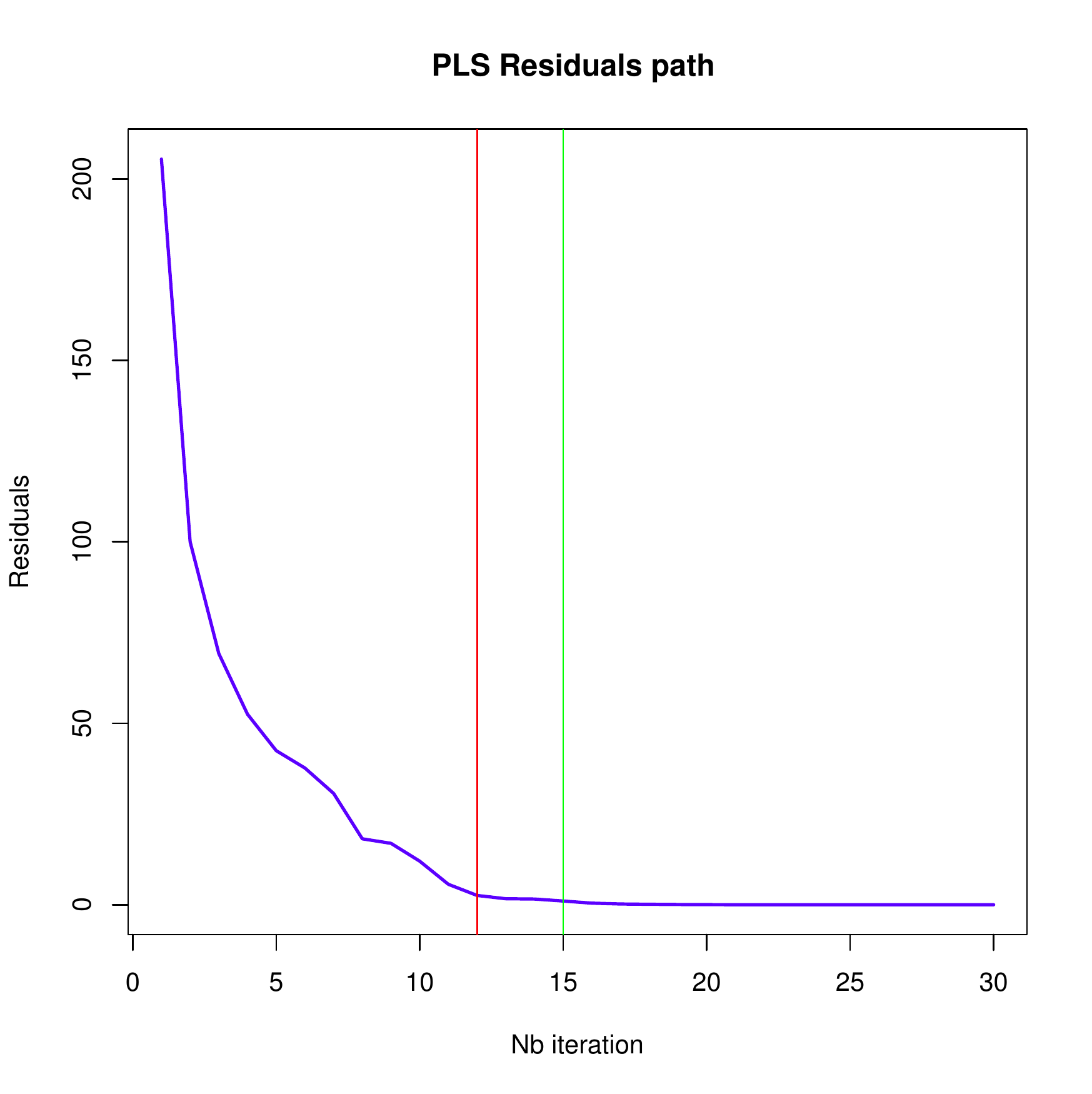} 
\caption{}
  \end{minipage}
  \end{figure}
Figures above show that when there is a clear gap in the distribution of the eigenvalues with $k$ large eigenvalues and the others very small then there is no need to go very far to recover most of the information with the PLS estimator. It is mainly due to the fact that the small eigenvalues can be considered as belonging to a same cluster.
 
\subsection{Prediction error}
Here we study the distance between the estimator and the true parameter in term of prediction error $\frac{1}{n}\parallel X\beta^{*}-X\hat{\beta}_{k} \parallel^{2} $. The expression of the prediction error is not as simple as the one of PCR and thus an upper bound for the prediction error is not as obvious since the PLS procedure is not a linear modeling procedure. Indeed, the direction of the new subspace onto which we project the observations depends in a complicated way on the singular value spectrum of the design matrix and also on the response. To compute or bound the prediction error we have to be careful because this also implies a control of the error due to the randomness of the subspace onto which we project your observations. We are going to use formula (\ref{eq:final-expression-residuals}) of Theorem \ref{theo: expression-det-2} to study the prediction error. 

We first make the following assumptions.
The real variables $\varepsilon_{1},...,\varepsilon_{n}$ are assumed to be unobservable i.i.d centered gaussian random variables with common variance $\sigma^{2}_{n}$.
We also assume
\begin{itemize}
\item (H.1): $\sigma^{2}_{n}=\mathcal{O}(\frac{1}{n})$. In other words we assume that 
$Y_{i}=x_{i}^{T}\beta^{*}+\delta_{n}\varepsilon_{i}$
where $\varepsilon_{i} \sim \mathcal{N}(0,1)$ and $\delta_{n} =\frac{1}{\sqrt{n}}$ is the noise level which is related to the number of observations.
\item (H.2): there exists a constant $L>0$ such that $\underset{1\leq i\leq n}{\textrm{min}}\lbrace p_{i}^{2}\rbrace\geq L,$ where $p_{i}=(X\beta^{*})^{T}u_{i}$. 
\end{itemize}
These two assumptions warrant that the signal to noise ratio $\left\lbrace  \left  \lvert \frac{\tilde{\tilde{\varepsilon_{i}}}}{p_{i}}\right \rvert \right\rbrace _{1\leq i \leq n} $ is not too small. This last quantity will appear again many time thereafter.

To bound from above the prediction error we have to be careful because PLS is a projected method but not onto a fixed subspace. The Krylov subspace onto which we project the data depends on $Y$ and thus is a random subspace. Therefore we have to also control the randomness of the subspace onto which we project the data. To do so, we introduce an oracle which is the regularization $\beta_{k}$ of $\beta^{*}$ onto the noise free Krylov subspace of dimension $k$.
This regularized approximation of $\beta^{*}$ is defined as
$$\beta_{k}\in \underset{\beta \in \mathcal{K}^{k}}{\rm{argmin}} \Vert X\beta^{*}-X\beta\Vert^{2}$$
where $\mathcal{K}^{k}:=\mathcal{K}^{k}(X^{T}X,X^{T}X\beta^{*}) $ is the noise free Krylov subspace.
Therefore we have
$$
\beta_{k}=P^{*}_{k}(X^{T}X)X^{T}X\beta^{*}
$$
where $P^{*}_{k}$ is a polynomial of degree $k-1$ which satisfies $$\Vert X\beta^{*}-XP^{*}_{k}(X^{T}X)X^{T}X\beta^{*}\Vert^{2}=\underset{P \in \mathcal{P}_{k-1}}{\textrm{argmin}}\Vert X\beta^{*}-XP(X^{T}X)X^{T}Y\Vert^{2}$$
and $X\beta^{*}-X\beta_{k}=Q^{*}_{k}(XX^{T})X\beta^{*}$
with $Q^{*}_{k}(t)=1-tP^{*}_{k}(t) \in \mathcal{P}_{k,1}$.
Then by the same arguments as the ones used to prove Proposition \ref{prop: poly-ortho} and Theorem \ref{theo: expression-det-2} we have that
\begin{enumerate}
\item the sequence of polynomials $(Q^{*}_{k})_{1\leq k \leq n }$ are orthogonals with respect to the measure $$d\mu(\lambda)=\sum_{j=1}^{n}\lambda_{j}p_{j}^{2}\delta_{\lambda_{j}},$$
where $p_{j}:=u_{j}^{T}(X\beta^{*})$.
\item $$
Q^{*}_{k}(\lambda_{i}):=\sum_{(j_{1},..,j_{k})\in I^{+}_{k}}w_{(j_{1},...,j_{k})}\prod_{l=1}^{k}(1-\frac{\lambda_{i}}{\lambda_{j_{l}}}) $$
where $w_{(j_{1},...,j_{k})}:=  \dfrac{p_{j_{1}}^{2}...p_{j_{k}}^{2}\lambda_{j_{1}}^{2}...\lambda_{j_{k}}^{2}V(\lambda_{j_{1}},...,\lambda_{j_{k}})^{2}}{\sum_{(j_{1},..,j_{k})\in I^{+}_{k}} p_{j_{1}}^{2}...p_{j_{k}}^{2}\lambda_{j_{1}}^{2}...\lambda_{j_{k}}^{2}V(\lambda_{j_{1}},...,\lambda_{j_{k}})^{2}}$.
\end{enumerate}

In fact we can write $$\frac{1}{n}\parallel X\beta^{*}-X\hat{\beta}_{k}\parallel^{2} = \frac{1}{n}\parallel X\beta^{*}-X\hat{P}_{k}(X^{T}X)X^{T}Y\parallel^{2}$$
$$\leq \frac{2}{n}\parallel X\beta^{*}-XP^{*}_{k}(X^{T}X)X^{T}Y\parallel^{2}+\frac{2}{n}\parallel XP^{*}_{k}(X^{T}X)X^{T}Y-X\hat{P}_{k}(X^{T}X)X^{T}Y\parallel^{2}$$
\begin{equation}
\label{eq:prediction_error}
=\frac{2}{n}\parallel X\beta^{*}-XP^{*}_{k}(X^{T}X)X^{T}Y\parallel^{2}+\frac{2}{n}\parallel \left( \hat{Q}_{k}(XX^{T})-Q^{*}_{k}(XX^{T})\right) Y\parallel^{2}.
\end{equation}
Therefore to bound by above $\frac{1}{n}\parallel X\beta^{*}-X\hat{\beta}_{k}\parallel^{2} $ we need to control two other quantities. The first one represents the error of regularization when projecting the linear predictor plus the noise on the observations onto the noise free Krylov subspace. The second quantities represents the approximation error between the projection onto the noise free Krylov subspace and onto the random Krylov subspace built from the observations.

Now let us introduce our main result on prediction error which provides an upper bound for the prediction error assuming a low variance of the observations noise.
\begin{theorem}
\label{theo:prediction_error}
Let $k<n$ and assume that (H.1) and (H.2) holds.
Then, with probability at least $1-n^{1-C}$ where $C>1$, we have 

$$
\frac{1}{n}\parallel X\beta^{*}-X\hat{\beta}_{k}\parallel^{2}\leq $$ 
$$\frac{1}{n}\left[ 2\left(\frac{\sqrt{C(X^{T}X)}-1}{\sqrt{C(X^{T}X)}+1} \right) ^{2k}+4\dfrac{\log(n)}{nL}\left(1+ \left(  \frac{\sqrt{C(X^{T}X)}-1}{\sqrt{C(X^{T}X)}+1}\right) ^{2k} \right) \right] \Vert X\beta^{*}\Vert^{2}$$
$$+\frac{4k^{2}\tilde{C}^{2}}{L}\frac{\log n}{n^{2}}\left(1+C\sqrt{\frac{\log n}{nL}} \right)^{2} \Vert X\beta^{*}\Vert^{2}_{W},$$
where $C(X^{T}X)=\frac{\lambda_{1}}{\lambda_{n}}$, $\tilde{C}$ is a constant and $W=\underset{1\leq i \leq n}{\rm{diag}}\left( \underset{I_{k}^{+}}{\rm{max}}\left( \prod_{l=1}^{k}\left \lvert \frac{\lambda_{i}}{\lambda_{j_{l}}}-1\right \rvert^{2}\right) \right)$.
\end{theorem}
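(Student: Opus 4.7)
The plan is to start from the decomposition \eqref{eq:prediction_error} and bound the two error terms separately: the ``oracle'' regularisation error on the noise-free Krylov subspace, and the ``randomness-of-subspace'' error measuring how $\hat Q_k$ deviates from $Q_k^{*}$. First I would substitute $X^{T}Y = X^{T}X\beta^{*}+X^{T}\varepsilon$ into the first term, obtaining
$$\tfrac{2}{n}\|X\beta^{*}-XP_{k}^{*}(X^{T}X)X^{T}Y\|^{2}\leq \tfrac{4}{n}\|Q_{k}^{*}(XX^{T})X\beta^{*}\|^{2}+\tfrac{4}{n}\|XP_{k}^{*}(X^{T}X)X^{T}\varepsilon\|^{2}.$$
The first piece is a purely deterministic regularisation error on the noise-free problem, and the Chebyshev argument of Proposition~\ref{theo:chebychev} (used exactly as in Proposition~\ref{prop:empirical-risk}) yields the factor $\bigl(\tfrac{\sqrt{C(X^{T}X)}-1}{\sqrt{C(X^{T}X)}+1}\bigr)^{2k}\|X\beta^{*}\|^{2}$.

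Next I would treat the noise term. A crucial point is that $P_{k}^{*}$ depends only on $X$ and $\beta^{*}$ and is therefore deterministic, so $XP_{k}^{*}(X^{T}X)X^{T}\varepsilon=\sum_{i}\lambda_{i}P_{k}^{*}(\lambda_{i})(u_{i}^{T}\varepsilon)u_{i}$ is a linear Gaussian. Writing $\lambda_{i}P_{k}^{*}(\lambda_{i})=1-Q_{k}^{*}(\lambda_{i})$ and using $|Q_{k}^{*}(\lambda_{i})|\leq \bigl(\tfrac{\sqrt{C}-1}{\sqrt{C}+1}\bigr)^{k}$ (again by Chebyshev on the noise-free measure) bounds each coefficient. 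Under (H.1) we have $\tilde\varepsilon_{i}\sim\mathcal{N}(0,1/n)$, so a standard Gaussian tail bound together with a union bound over the $n$ eigendirections gives $|\tilde\varepsilon_{i}|\leq \sqrt{C\log n/n}$ simultaneously for all $i$ with probability at least $1-n^{1-C}$. Combined with (H.2) this produces the term proportional to $\tfrac{\log n}{nL}\bigl(1+(\cdot)^{2k}\bigr)\|X\beta^{*}\|^{2}$.

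For the second term $\tfrac{2}{n}\|(\hat Q_{k}-Q_{k}^{*})(XX^{T})Y\|^{2}$, I would expand in the $u_{i}$ basis to get $\sum_{i}(\hat Q_{k}(\lambda_{i})-Q_{k}^{*}(\lambda_{i}))^{2}(u_{i}^{T}Y)^{2}$ and use the barycentric formula from Theorem~\ref{theo: expression-det-2}: $\hat Q_{k}(\lambda_{i})-Q_{k}^{*}(\lambda_{i})=\sum_{I_{k}^{+}}(\hat w-w)_{(j_{1},\ldots,j_{k})}\,g_{j_{1},\ldots,j_{k}}(\lambda_{i})$. Since both $\hat w$ and $w$ are probability measures on $I_{k}^{+}$, their total-variation distance controls the difference, and $\|g_{j_{1},\ldots,j_{k}}(\lambda_{i})\|\leq \max_{I_{k}^{+}}\prod_{l}|\lambda_{i}/\lambda_{j_{l}}-1|=W_{ii}^{1/2}$. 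To bound the total variation I would write $\hat p_{i}^{2}=p_{i}^{2}(1+\tilde\varepsilon_{i}/p_{i})^{2}$, use the uniform estimate $|\tilde\varepsilon_{i}/p_{i}|\leq \sqrt{C\log n/(nL)}$ from (H.1)--(H.2), and expand the product of $k$ such factors in the numerator and denominator; the difference is of order $k\sqrt{\log n/(nL)}\,(1+C\sqrt{\log n/(nL)})$, whose square contributes the $k^{2}\tilde C^{2}\tfrac{\log n}{n^{2}L}(1+C\sqrt{\log n/(nL)})^{2}$ prefactor. Finally I would replace $(u_{i}^{T}Y)^{2}$ by $p_{i}^{2}$ up to a Gaussian perturbation controlled by the same concentration event, so that $\sum_{i}p_{i}^{2}W_{ii}=\|X\beta^{*}\|_{W}^{2}$ appears.

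The main obstacle is the last step: the nonlinear propagation of the noise through the rational weights $\hat w_{(j_{1},\ldots,j_{k})}$. Because both numerator and denominator of $\hat w$ are sums over $I_{k}^{+}$ with $\binom{n}{k}$ terms that share common factors $\hat p_{j_{l}}^{2}\lambda_{j_{l}}^{2}V^{2}$, one cannot bound the total variation by a naive combinatorial union bound. The right move is to factor out the common perturbation $\prod_{l}(1+\tilde\varepsilon_{j_{l}}/p_{j_{l}})^{2}$ inside each term, subtract the renormalisation, and show that the leading-order errors cancel in the numerator-denominator ratio so that only a first-order-in-$\sqrt{\log n/(nL)}$ term survives, uniformly in the index $(j_{1},\ldots,j_{k})$. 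This cancellation is what prevents a factor $\binom{n}{k}$ from entering and ultimately produces the clean $k^{2}\tilde C^{2}$ dependence stated in the theorem.
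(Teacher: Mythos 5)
Your decomposition and overall route coincide with the paper's: you split the prediction error via \eqref{eq:prediction_error} into the oracle regularisation error on the noise-free Krylov subspace and the subspace-randomness term, treat the first by a further splitting into a Chebyshev-controlled deterministic part plus a Gaussian noise part handled by a union bound on the event $\{|\tilde\varepsilon_i|\le\sqrt{\log n/n}\ \forall i\}$, and treat the second by writing $\hat Q_k-Q^*_k$ through the barycentric formula of Theorem~\ref{theo: expression-det-2} and controlling the perturbation of the weights. Your observation that the weights $\hat w$ and $w$ are probability measures on $I^+_k$ and that the uniform multiplicative perturbation $\prod_l(1+\tilde\varepsilon_{j_l}/p_{j_l})^2\in[(1-\delta)^{2k},(1+\delta)^{2k}]$ cancels against the renormalisation (so that no $\binom{n}{k}$ factor appears and only an $O(k\delta)$ total-variation discrepancy survives) is exactly the mechanism of Proposition~\ref{prop:pred-second-term}, phrased there as an add-and-subtract bound on the two ratios of sums of positive terms.

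One step as you state it is not valid: the pointwise bound $|Q^*_k(\lambda_i)|\le\bigl(\tfrac{\sqrt{C}-1}{\sqrt{C}+1}\bigr)^k$ for each $i$. The Chebyshev minimax result of Proposition~\ref{theo:chebychev} bounds $\min_{Q\in\mathcal P_{k,1}}\max_{\lambda\in[\lambda_n,\lambda_1]}|Q(\lambda)|$, which controls the \emph{minimax} polynomial; but $Q^*_k$ is the minimiser of the weighted $\ell^2$ criterion $\sum_i Q(\lambda_i)^2 p_i^2$, i.e.\ the orthogonal polynomial for $d\mu$, and its values on the spectrum need not obey the sup-norm bound pointwise --- indeed the paper itself notes that the factors $1-Q^*_k(\lambda_i)$ oscillate above and below one. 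What you actually need (and what suffices to produce the stated $\tfrac{\log n}{nL}\bigl(1+(\cdot)^{2k}\bigr)\Vert X\beta^*\Vert^2$ term) is the aggregate bound $\sum_i Q^*_k(\lambda_i)^2p_i^2=\Vert Q^*_k(XX^T)X\beta^*\Vert^2\le\bigl(\tfrac{\sqrt{C}-1}{\sqrt{C}+1}\bigr)^{2k}\Vert X\beta^*\Vert^2$, which you have already established for the deterministic piece; combining it with $\sum_i(1-Q^*_k(\lambda_i))^2p_i^2\le 2\sum_i p_i^2+2\sum_i Q^*_k(\lambda_i)^2p_i^2$ (the triangle inequality, as in the paper's proof of Proposition~\ref{prop: first term}) repairs the argument without changing anything else.
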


\begin{proof}

Theorem \ref{theo:prediction_error} is a straightforward consequence of Proposition \ref{prop: first term}, Proposition \ref{prop:pred-second-term} and (\ref{eq:prediction_error}) below. 
In fact we recall that $$\frac{1}{n}\parallel X\beta^{*}-X\hat{\beta}_{k}\parallel^{2} $$
$$\leq\frac{2}{n}\parallel X\beta^{*}-XP^{*}_{k}(X^{T}X)X^{T}Y\parallel^{2}+\frac{2}{n}\parallel \left( \hat{Q}_{k}(XX^{T})-Q^{*}_{k}(XX^{T})\right) Y\parallel^{2}.$$
The following proposition provides an upper bound for the first term of (\ref{eq:prediction_error}).
\begin{prop}
\label{prop: first term}
With probability at least $1-n^{1-C}$ where $C>1$, we have for all $i=1,...,n$
$$\frac{1}{n}\parallel X\beta^{*}-XP^{*}_{k}(X^{T}X)X^{T}Y\parallel^{2} $$
$$
\leq \frac{1}{n}\left[ 2\left(\frac{\sqrt{C(X^{T}X)}-1}{\sqrt{C(X^{T}X)}+1} \right) ^{2k}+4\dfrac{\log(n)}{nL}\left(1+ \left(  \frac{\sqrt{C(X^{T}X)}-1}{\sqrt{C(X^{T}X)}+1}\right) ^{2k} \right)  \right]  \Vert X\beta^{*}\Vert^{2}.$$
\end{prop}

Then we bound by above the second term $\frac{1}{n}\parallel \left( \hat{Q}_{k}(XX^{T})-Q^{*}_{k}(XX^{T})\right) Y\parallel^{2}$.
\begin{prop}
\label{prop:pred-second-term}
Assume (H.1) and (H.2).
Then with probability larger than $1-n^{1-C}$ where $C>1$ we have
$$\frac{1}{n}\parallel\left( \hat{Q}_{k}(X^{T}X)-Q^{*}_{k}(X^{T}X)\right) Y\parallel^{2}$$
$$\leq \frac{4k^{2}\tilde{C}^{2}}{L}\frac{\log n}{n^{2}}\left(1+C\sqrt{\frac{\log n}{nL}} \right)^{2} \Vert X\beta^{*}\Vert^{2}_{W},$$
where $W=\underset{1\leq i \leq n}{\rm{diag}}\left( \underset{I_{k}^{+}}{\rm{max}}\left( \prod_{l=1}^{k}\left \lvert \frac{\lambda_{i}}{\lambda_{j_{l}}}-1\right \rvert^{2}\right) \right)$ and $\tilde{C}$ is a constant.
\end{prop}
The theorem is proved by combining the two previous bounds.

\end{proof}

The bound in Theorem \ref{theo:prediction_error} highly depends on the signal to noise ratio which must not be too small with respect to the eigenvector directions of $X^{T}X$ to ensure good statistical properties of the PLS estimator. This is the major difference with PCA that takes into account the variance of the noise on the observations to build the latent variables but not the level of the signal. On the contrary PLS takes into account the signal through $Y$ to construct the latent variables. That is why for PLS the signal to noise ratio plays an important role in the accuracy of the  model. 

The following simulation highlights this statement showing that there is generally no hope to recover a good approximation of the predicted function in case of a high variance of the noise. 
We have compared the performances of the PLS estimator at different steps and for different levels of noise. Here is a typical example of the behaviour of the prediction error for the PLS estimator. The data set is simulated according to model (\ref{eq:regression-model}) with $n=p=100$. Figure \ref{Figure1} represents the PLS prediction error path for different value of the paremeter $k$ and for different level of noise on the observations.

\begin{figure}[H]
   \centering
       \includegraphics[scale=0.5]{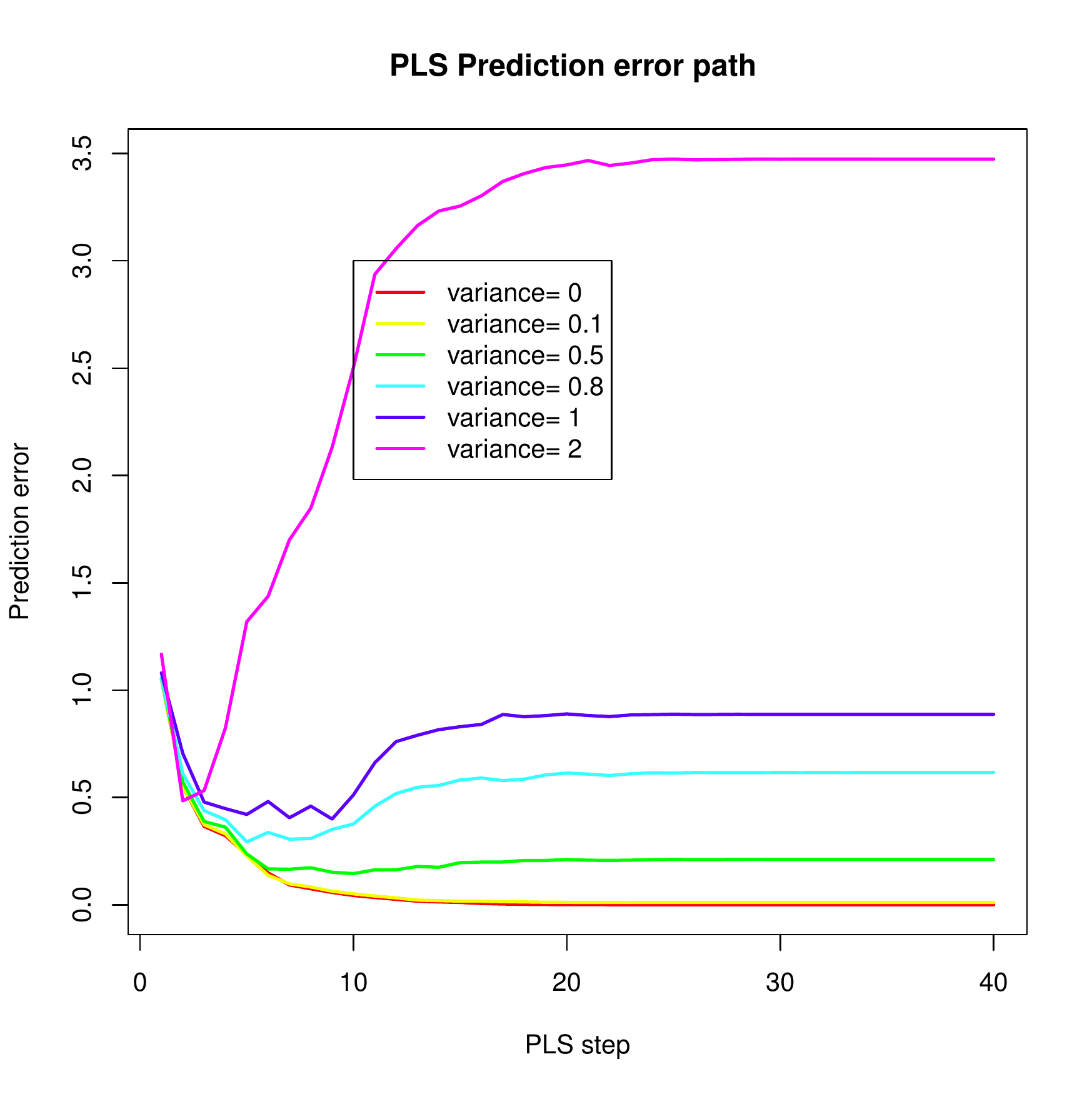} 
       \caption{}
       \label{Figure1}
\end{figure}

Figures above show that there is no assurance that the prediction error goes to zero when the variance is too high compared to the number of observations. This is essentially due to the fact that the PLS method is an iterative technique and thus the noise can propagate at each step of the construction leading to an undue amplification of the error.

\section{Conclusion}
PLS is a method used to remove multicollinearities and based on the construction of a new subspace of reduced dimension. This new subspace is built to maximize both the covariance of the covariates and the correlation to the response. They key idea behind PLS is to approximate and regularize the pseudo-inverse of the covariance matrix by a polynomial in the power of the matrix. PLS is in fact a least square problem with random linear constraints. This method can also be viewed as a minimization problem over a particular polynomial subspace. From this perspective we showed that the PLS residuals are in fact orthogonal polynomials with respcect to a measure based on the spectrum of the covariance matrix. From the definition of this discrete measure we deduce a new formula for the residuals. This formula depends explicitely on the observations noise and on the spectrum of the covariance matrix. At last we have taken advantages of these findings in a regression context to state new results for the estimation and prediction error for PLS under a low variance of the noise. The control of the signal-to-noise ratio and of the spectrum distribution seems to be the key to state such results. We have showed that PLS is not an automatic solution to avoid the problem of multicolinearity in regression. We have to be careful when using PLS because its statistical properties are strongly depending on the features of the data and in particular on the distribution of the spectrum. The main drawback of PLS is the fact that it seems inapropriate if $Y$ has too much variation but the advantages is that it takes both $X$ and $Y$ into account in the decomposition of $X$ contrary to PCR. To conclude this paper throw new lights on PLS in a regression context but this is not the end of the road and the formula for the residuals should be explored further to completely understand the method.

\section{Proof}
\subsection{Proof of Proposition \ref{prop: poly-ortho}}
\begin{proof}
Let $k \in \mathbb{N}^{*}$ and $n> l>k$.
Because $\hat{Q}_{k}\in \mathcal{P}_{k,1}$ we have $$XX^{T}\hat{Q}_{k}(XX^{T})Y \in \mathcal{K}^{k+1}(XX^{T},XX^{T}Y).$$ 
Furthermore from (\ref{eq:residu-poly}) we get $\hat{Q}_{l}(XX^{T})Y \perp \mathcal{K}^{l}(XX^{T},XX^{T}Y).$
Besides $\mathcal{K}^{l}(XX^{T},XX^{T}Y)\supset \mathcal{K}^{k+1}(XX^{T},XX^{T}Y).$
Therefore we deduce that for all  $k\neq l$ we have
$X^{T}\hat{Q}_{k}(XX^{T})Y  \perp \hat{Q}_{l}(XX^{T})Y .$
Then using the SVD decomposition of $X$ we get $XX^{T}=\sum_{1\leqslant j\leqslant n}\lambda_{j}u_{i}u_{i}^{T}$ and 
 $$0=\left\langle XX^{T}\hat{Q}_{k}(XX^{T})Y,\hat{Q}_{l}(XX^{T})Y \right\rangle$$
 $$=\left(\sum_{1\leqslant j\leqslant n}\lambda_{j}\hat{Q}_{k}(\lambda_{j})u_{j}u_{j}^{T}Y \right) ^{T}\left(\sum_{1\leqslant j\leqslant n}\hat{Q}_{l}(\lambda_{j})u_{j}u_{j}^{T}Y \right)
  =\sum_{1\leqslant j\leqslant n}\lambda_{j}\hat{Q}_{k}(\lambda_{j})\hat{Q}_{l}(\lambda_{j})(u_{j}^{T}Y)^{2}.
$$

Finally we get
$$0=\sum_{1\leqslant j\leqslant n}\lambda_{j}\hat{Q}_{k}(\lambda_{j})\hat{Q}_{l}(\lambda_{j})(u_{j}^{T}Y)^{2}.$$
And we deduce that $\hat{Q}_{1},\hat{Q}_{2},...,\hat{Q}_{n-1}$ is a sequence of orthonormal polynomials with respect to the measure 
$$d\hat{\mu}(\lambda)=\sum_{j=1}^{n}\lambda_{j}(u_{j}^{T}Y)^{2}\delta_{\lambda_{j}}.$$
\end{proof}

\subsection{Proof of Theorem \ref{theo: expression-det-2}}
We recall that $(\hat{Q}_{1})_{1\leq k <n}$ is a sequence of orthonormal polynomials with respect to the measure 
$d\hat{\mu}(\lambda)$. Returning to the definition of orthogonal polynomials we first express the polynomials $(\hat{Q}_{k})_{1\leqslant k<n}$ as the quotient of two determinants.

\begin{prop}
For all $j \in \mathbb{N}$, let $\hat{m}_{j}=\int x^{j}d\hat{\mu}$ .

Then for all $k \in\llbracket 1,...,n-1 \rrbracket$ we have
\begin{equation}
\hat{Q}_{k}(x)=(-1)^{k}\dfrac{\textrm{det}\left( \hat{G}_{2k-1}(x)\right) }{\textrm{det}\left( \hat{H}_{2k-1}\right) }
\label{prop: expression-poly}
\end{equation}
where $$\hat{G}_{2k-1}(x):= \left[ \begin{array}{cccc}
\hat{m}_{0} & \hat{m}_{1} & ... & \hat{m}_{k} \\ 
 \vdots&  &  &  \\ 
\hat{m}_{k-1} & \hat{m}_{k} & ... & \hat{m}_{2k-1} \\ 
1 & x & ... & x^{k}
\end{array} \right]$$
and   $$\hat{H}_{2k-1}:= \left[ \begin{array}{cccc}
\hat{m}_{1} & \hat{m}_{2} & ... & \hat{m}_{k} \\ 
 \vdots &  &  &  \\ 
\hat{m}_{k-1} & \hat{m}_{k} & & \hat{m}_{2k-2} \\ 
 \hat{m}_{k}&  \hat{m}_{k+1} & ... &  \hat{m}_{2k-1}
\end{array} \right].$$
\end{prop}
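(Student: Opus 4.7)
The plan is to recognize this as the classical determinantal formula for orthogonal polynomials, specialized to the normalization $\hat{Q}_k(0)=1$ rather than monic. Since Proposition \ref{prop: poly-ortho} already established that $(\hat{Q}_k)_{1\le k<n}$ is an orthogonal family with respect to $d\hat{\mu}$, it suffices to exhibit a polynomial of degree $k$, orthogonal to $1,x,\dots,x^{k-1}$ in $L^2(\hat{\mu})$, with constant term $1$, and then invoke uniqueness.

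First I would let $F_k(x):=\det(\hat{G}_{2k-1}(x))$ and expand the determinant along its last row to see that $F_k$ is a polynomial in $x$ of degree at most $k$. The coefficient of $x^k$ is (up to sign) $\det(\hat{H}_{2k-1}'),$ where $\hat{H}'_{2k-1}$ is the Hankel moment matrix of order $k$ built from $\hat{m}_0,\dots,\hat{m}_{2k-2}$; assuming the measure has enough support (which is ensured by $k<n$ and the fact that $\hat{\mu}$ has $n$ mass points with nonzero weights when the eigenvalues are simple), this determinant is nonzero, so $\deg F_k=k$.

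Next I would check the orthogonality relations. For $0\le i\le k-1$, by linearity of the integral in the last row of the determinant,
\begin{equation*}
\int x^i F_k(x)\,d\hat{\mu}(x)=\det\begin{pmatrix}\hat{m}_0 & \hat{m}_1 & \cdots & \hat{m}_k\\ \vdots & & & \vdots\\ \hat{m}_{k-1}& \hat{m}_k &\cdots &\hat{m}_{2k-1}\\ \hat{m}_i & \hat{m}_{i+1} & \cdots & \hat{m}_{i+k}\end{pmatrix}.
\end{equation*}
The last row coincides with the $(i+1)$-th row of the matrix above it, so the determinant vanishes. Hence $F_k\perp_{\hat{\mu}} x^i$ for all $i<k$. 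By uniqueness (up to scalar) of the orthogonal polynomial of degree $k$, we get $F_k(x)=c\,\hat{Q}_k(x)$ for some constant $c$.

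Finally I would pin down $c$ by setting $x=0$. Expansion along the last row $(1,0,\dots,0)$ leaves only the minor deleting the last row and first column, which is exactly $\hat{H}_{2k-1}$, with sign $(-1)^{k+0}$; thus $F_k(0)=(-1)^k\det(\hat{H}_{2k-1})$. Since $\hat{Q}_k(0)=1$ by construction, $c=(-1)^k\det(\hat{H}_{2k-1})$, which yields the claimed identity. The only place that requires care is confirming non-degeneracy ($\det(\hat{H}_{2k-1})\neq 0$), but this follows from $\hat{\mu}$ being a positive measure supported on at least $k$ points, and is automatic in the regime $k<n$ already assumed in Proposition \ref{prop: poly-ortho}; this is the one step that I'd expect to need a brief justification rather than a routine calculation.
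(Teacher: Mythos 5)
Your proof is correct, but it takes a different (and in fact more self-contained) route than the paper. The paper characterizes $\hat{Q}_{k}$ by the same three properties you use (degree $k$, orthogonality to $1,x,\dots,x^{k-1}$ in $L^{2}(\hat{\mu})$, constant term $1$), but then writes the orthogonality conditions as the $k\times k$ linear system $\hat{H}_{2k-1}(\alpha_{1}^{k},\dots,\alpha_{k}^{k})^{T}=-(\hat{m}_{0},\dots,\hat{m}_{k-1})^{T}$ for the unknown coefficients and invokes Cramer's rule, leaving implicit the reassembly of the $k$ coefficient ratios into the single $(k+1)\times(k+1)$ determinant $\det(\hat{G}_{2k-1}(x))$. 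You instead take the determinant as the candidate polynomial and verify the three properties directly: the repeated-row argument for orthogonality (using linearity of the determinant in the last row), the leading-minor computation for the degree, and the cofactor expansion at $x=0$ giving $F_{k}(0)=(-1)^{k}\det(\hat{H}_{2k-1})$, which fixes the normalizing constant. The two arguments are of course closely related (Cramer's rule is itself a determinantal identity), but yours has the advantage of making the passage to the closed form explicit and of isolating exactly where non-degeneracy is needed; the paper's version is shorter but hides both points. Your non-degeneracy remark is the right one: $\det(\hat{H}_{2k-1})\neq 0$ because $\hat{H}_{2k-1}$ is the order-$k$ Hankel matrix of the positive measure $x\,d\hat{\mu}(x)$, whose support has at least $k$ points when $k<n$ (with distinct eigenvalues and nonzero weights $\lambda_{j}(u_{j}^{T}Y)^{2}$); the same condition is silently required for the paper's linear system to be uniquely solvable, so you are not assuming anything beyond what the paper does.
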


\begin{proof}
The polynomials $(\hat{Q}_{k})_{1\leqslant k <n}$ are the ones which satisfy
\begin{enumerate}
\item $\hat{Q}_{k}(x)=\alpha_{k}^{k}x^{k}+\alpha_{k}^{k-1}x^{k-1}+...+\alpha_{1}^{k}x+\alpha_{0}^{k}$
\item $\forall j \in \left[0, k-1 \right] $, $\int \left[  x^{j}(\alpha_{k}^{k}x^{k}+\alpha_{k}^{k-1}x^{k-1}+...+\alpha_{1}^{k}x+\alpha_{0}^{k})\right] d\hat{\mu}=0$
\item $\hat{Q}_{k}(0)=1$
\end{enumerate}
This is equivalent to solve the following system of $k$ equations with $k$ unknowns

$$\forall j \in \llbracket 0, k-1 \rrbracket, \quad \alpha_{k}^{k}\hat{m}_{j+k}+\alpha_{k}^{k-1}\hat{m}_{j+k-1}+...+\alpha_{1}^{k}\hat{m}_{j+1}=-\hat{m}_{j}.$$
The solution $(\alpha_{1}^{k},...,\alpha_{k}^{k})$ of this system satisfies
$$\left[ \begin{array}{cccc}
\hat{m}_{1} & \hat{m}_{2} & ... & \hat{m}_{k} \\ 
 \vdots &  &  &  \\ 
\hat{m}_{k-1} & \hat{m}_{k} &  & \hat{m}_{2k-2} \\ 
 \hat{m}_{k}&  \hat{m}_{k+1} & ... &  \hat{m}_{2k-1}
\end{array} \right]\left[\begin{array}{c}
\alpha_{1}^{k} \\ 
\alpha_{2}^{k} \\ 
\\ 
\alpha_{k}^{k}
\end{array}  \right] =-\left[\begin{array}{c}
\hat{m}_{0} \\ 
\hat{m}_{1} \\ 
\\ 
\hat{m}_{k-1}
\end{array}  \right]$$
We conclude the proof using the Cramer's rule which provides explicit formula for the solution of a system of linear equations with as many equations as unknowns.
\end{proof}

Then returning to the definition of the discrete measure $\hat{\mu}$ we explicitly express $\hat{Q}_{k}(\lambda_{i})$ in terms of $(\lambda_{i})_{1\leq i \leq n}$ and $(u_{i}^{T}Y)_{1\leq i \leq n}$ for all $1\leq k < n$  and all $1\leq i \leq n$.
\begin{prop}
\label{prop: expression-det}
Let $k \in\llbracket 1,\dots,n-1 \rrbracket$ and $i \in\llbracket 1,\dots,n  \rrbracket$.

Let $\hat{p}_{i}:=Y^{T}u_{i}$. Define 
$$I_{k}=\left\lbrace (j_{1},...,j_{k})\in \llbracket1,n \rrbracket^{k},
j_{1}\neq ...\neq j_{k}\right\rbrace$$ and $$I_{k,i}=\left\lbrace (j_{1},...,j_{k})\in \llbracket 1,n \rrbracket^{k},
j_{1}\neq ...\neq j_{k}\neq i  \right\rbrace.$$
We have
\begin{equation}
\label{prop: expression-poly2}
\hat{Q}_{k}(\lambda_{i})=
(-1)^{k} \dfrac{\sum_{(j_{1},..,j_{k})\in I_{k,i}}\hat{p}_{j_{1}}^{2}...\hat{p}_{j_{k}}^{2}V(\lambda_{j_{1}},...,\lambda_{j_{k}},\lambda_{i})\lambda_{j_{1}}...\lambda_{j_{k}}^{k}}{\sum_{(j_{1},..,j_{k})\in I_{k}}\hat{p}_{j_{1}}^{2}...\hat{p}_{j_{k}}^{2}V(\lambda_{j_{1}},...,\lambda_{j_{k}})\lambda_{j_{1}}^{2}...\lambda_{j_{k}}^{k+1}}
\end{equation}

where $V(x_{1},...,x_{l})$ is the Vandermonde determinant of $(x_{1},...,x_{l}) \in \mathbb{R}^{l}$.

If $k=n$ we have
$$\hat{Q}_{k}(\lambda_{i})=0.$$

\end{prop}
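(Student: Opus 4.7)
The plan is to take the determinantal formula \eqref{prop: expression-poly} of the preceding proposition as the starting point and to evaluate both determinants explicitly. The key input is the closed form of the moments obtained by unfolding the definition of the measure $\hat{\mu}$: one has $\hat{m}_j = \int x^j\,d\hat{\mu}(x) = \sum_{l=1}^n \lambda_l^{j+1}\hat{p}_l^2$ for every $j \geq 0$. Once this substitution is made, each entry of $\hat{H}_{2k-1}$ and of the first $k$ rows of $\hat{G}_{2k-1}(\lambda_i)$ becomes a sum indexed by $l\in\{1,\ldots,n\}$, and the strategy is to apply multilinearity of the determinant row by row and to recognise Vandermonde structures in the resulting inner determinants.

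For the denominator, the $r$th row of $\hat{H}_{2k-1}$ reads $\sum_{j=1}^n \hat{p}_j^2\,\lambda_j^{r+1}\,(1,\lambda_j,\ldots,\lambda_j^{k-1})$. Expanding row by row produces, for each ordered tuple $(j_1,\ldots,j_k)\in\{1,\ldots,n\}^k$, the factor $\prod_{r=1}^k \hat{p}_{j_r}^2\,\lambda_{j_r}^{r+1}$ multiplied by the determinant of the $k\times k$ matrix whose $r$th row is $(1,\lambda_{j_r},\ldots,\lambda_{j_r}^{k-1})$. This inner determinant is the classical Vandermonde $V(\lambda_{j_1},\ldots,\lambda_{j_k})$ and vanishes unless the indices $j_r$ are pairwise distinct, so the effective sum is over $I_k$, reproducing exactly the denominator of \eqref{prop: expression-poly2}.

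For the numerator, I would apply the same expansion to the first $k$ rows of $\hat{G}_{2k-1}(\lambda_i)$, leaving the last row $(1,\lambda_i,\ldots,\lambda_i^k)$ untouched. Row $r$ of the first block decomposes as $\sum_j \hat{p}_j^2 \lambda_j^r(1,\lambda_j,\ldots,\lambda_j^k)$, so each ordered tuple $(j_1,\ldots,j_k)$ contributes $\prod_{r=1}^k \hat{p}_{j_r}^2 \lambda_{j_r}^r$ times the $(k+1)\times(k+1)$ Vandermonde determinant $V(\lambda_{j_1},\ldots,\lambda_{j_k},\lambda_i)$. This determinant vanishes as soon as two of the values $\lambda_{j_1},\ldots,\lambda_{j_k},\lambda_i$ coincide, which restricts the surviving tuples to $I_{k,i}$ and yields the numerator of \eqref{prop: expression-poly2}. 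Taking the ratio and keeping the global sign $(-1)^k$ from \eqref{prop: expression-poly} gives the claimed identity.

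The degenerate case $k=n$ then drops out of the same formula: the set $I_{n,i}$ is empty, since one cannot select $n$ pairwise distinct indices in $\{1,\ldots,n\}$ all different from $i$, so the numerator is zero and $\hat{Q}_n(\lambda_i)=0$. The only real difficulty is clerical, namely tracking exactly which power of $\lambda_{j_r}$ is factored out of each row of each matrix and reconciling the Vandermonde sign convention with the layout of \eqref{prop: expression-poly2}; once this bookkeeping is carried out, the identity is a direct consequence of multilinearity and no further tool is needed.
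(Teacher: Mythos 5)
Your proposal matches the paper's own proof essentially step for step: both substitute $\hat{m}_j=\sum_l \lambda_l^{j+1}\hat{p}_l^2$ into the determinantal formula of the preceding proposition, expand $\hat{G}_{2k-1}(\lambda_i)$ and $\hat{H}_{2k-1}$ by row multilinearity, recognise the inner determinants as Vandermonde determinants whose vanishing restricts the sums to $I_{k,i}$ and $I_k$, and dispose of $k=n$ by noting the numerator vanishes. The argument is correct and no further comment is needed.
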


\begin{proof}
Let $1\leq i\leq n$. Using the fact that $d\hat{\mu}=\sum_{j=1}^{n}\lambda_{j}\hat{p}_{j}^{2}\delta_{\lambda_{j}}$ we get
$$\textrm{det}\left[ \begin{array}{cccc}
\hat{m}_{0} & \hat{m}_{1} & ... & \hat{m}_{k} \\ 
\vdots &  &  &  \\ 
\hat{m}_{k-1} & \hat{m}_{k} &  & \hat{m}_{2k-1} \\ 
1 & \lambda_{i} & ... & \lambda_{i}^{k}
\end{array} \right]$$
$$=\textrm{det}\left[ \begin{array}{cccc}
\sum_{j=1}^{n}\lambda_{j}\hat{p}_{j}^{2} & \sum_{j=1}^{n}\lambda_{j}^{2}\hat{p}_{j}^{2} &...  & \sum_{j=1}^{n}\lambda_{j}^{k+1}\hat{p}_{j} ^{2}\\ 
\vdots &  &  &  \\ 
 \sum_{j=1}^{n}\lambda_{j}^{k}\hat{p}_{j}^{2}& \sum_{j=1}^{n}\lambda_{j}^{k}\hat{p}_{j}^{2}&  & \sum_{j=1}^{n}\lambda_{j}^{2k}\hat{p}_{j}^{2} \\ 
1 & \lambda_{i} & ... & \lambda_{i}^{k}
\end{array} \right]$$

$$=\sum_{j_{1}=1}^{n}...\sum_{j_{k}=1}^{n}\hat{p}_{j_{1}}^{2}\hat{p}_{j_{2}}^{2}...\hat{p}_{j_{k}}^{2}\lambda_{j_{1}}\lambda_{j_{2}}^{2}...\lambda_{j_{k}}^{k}\textrm{det}\left[ \begin{array}{cccc}
1 & \lambda_{j_{1}}& ... & \lambda_{j_{1}}^{k} \\ 
\vdots &  &  &  \\ 
1& \lambda_{j_{k}}&  & \lambda_{j_{k}}^{k} \\ 
1 & \lambda_{i} & ... & \lambda_{i}^{k}
\end{array} \right]$$
where $$\textrm{det}\left[ \begin{array}{cccc}
1 & \lambda_{j_{1}}&  & \lambda_{j_{1}}^{k} \\ 
\vdots &  &  &  \\ 
1& \lambda_{j_{k}}&  & \lambda_{j_{k}}^{k} \\ 
1 & \lambda_{i} & ... & \lambda_{i}^{k}
\end{array} \right]=V(\lambda_{j_{1}},...,\lambda_{j_{k}}, \lambda_{i}).$$ $V(\lambda_{j_{1}},...,\lambda_{j_{k}}, \lambda_{i})$ is the Vandermonde determinant of $\lambda_{j_{1}},...,\lambda_{j_{k}}, \lambda_{i}$ and is non zero only if all the $\lambda_{j_{1}},...,\lambda_{j_{k}}, \lambda_{i}$ are distincts.

Therefore if $k<n$, we get 
\begin{equation}
\label{eq: det1}
\textrm{det}\left[ \begin{array}{cccc}
\hat{m}_{0} & \hat{m}_{1} & ... & \hat{m}_{k} \\ 
 \vdots&  &  &  \\ 
\hat{m}_{k-1} & \hat{m}_{k} &  & \hat{m}_{2k-1} \\ 
1 & \lambda_{i} & ... & \lambda_{i}^{k}
\end{array} \right]=\sum_{(j_{1},..,j_{k})\in I_{k,i}}\hat{p}_{j_{1}}^{2}\hat{p}_{j_{2}}^{2}...\hat{p}_{j_{k}}^{2}\lambda_{j_{1}}\lambda_{j_{2}}^{2}...\lambda_{j_{k}}^{k} V(\lambda_{j_{1}},...,\lambda_{j_{k}}, \lambda_{i}).
\end{equation}
Using the same arguments we also get
\begin{equation}
\label{eq: det2}
\textrm{det}\left[ \begin{array}{cccc}
\hat{m}_{1} & \hat{m}_{2} & ...  & \hat{m}_{k} \\ 
\vdots &  &  &  \\ 
\hat{m}_{k-1} & \hat{m}_{k} &  & \hat{m}_{2k-2} \\ 
 \hat{m}_{k}&  \hat{m}_{k+1} & ... &  \hat{m}_{2k-1}
\end{array} \right]= \sum_{(j_{1},..,j_{k})\in I_{k}} \hat{p}_{j_{1}}^{2}\hat{p}_{j_{2}}^{2}...\hat{p}_{j_{k}}^{2}\lambda_{j_{1}}^{2}\lambda_{j_{2}}^{3}...\lambda_{j_{k}}^{k+1}V(\lambda_{j_{1}},...,\lambda_{j_{k}}). 
\end{equation}
From (\ref{prop: expression-poly}), (\ref{eq: det1}) and (\ref{eq: det2}) we deduce (\ref{prop: expression-poly2}).

When $k=n$,  $\textrm{det}\left[ \begin{array}{cccc}
\hat{m}_{0} & \hat{m}_{1} & ... & \hat{m}_{k} \\ 
\vdots &  &  &  \\ 
\hat{m}_{k-1} & \hat{m}_{k} &  & \hat{m}_{2k-1} \\ 
1 & \lambda_{i} & ... & \lambda_{i}^{k}
\end{array} \right]=0$
and therefore $$\hat{Q}_{k}(\lambda_{i})=0.$$
\end{proof}

Now using the properties of the Vandermonde determinant we provide a more useful characterization of the residual $\hat{Q}_{k}(\lambda_{i})$.
Let $k<n$. Formula (\ref{prop: expression-poly2}) of Proposition \ref{prop: expression-det} tells us that
\begin{equation}
\label{eq: preuve-prop-exp-det1}
\hat{Q}_{k}(\lambda_{i})=
(-1)^{k} \dfrac{\sum_{(j_{1},..,j_{k})\in I_{k,i}}\hat{p}_{j_{1}}^{2}...\hat{p}_{j_{k}}^{2}V(\lambda_{j_{1}},...,\lambda_{j_{k}},\lambda_{i})\lambda_{j_{1}}...\lambda_{j_{k}}^{k}}{\sum_{(j_{1},..,j_{k})\in I_{k}}\hat{p}_{j_{1}}^{2}...\hat{p}_{j_{k}}^{2}V(\lambda_{j_{1}},...,\lambda_{j_{k}})\lambda_{j_{1}}^{2}...\lambda_{j_{k}}^{k+1}}.
\end{equation}

On the one hand, we have
$$\sum_{(j_{1},..,j_{k})\in I_{k}}\hat{p}_{j_{1}}^{2}...\hat{p}_{j_{k}}^{2}V(\lambda_{j_{1}},...,\lambda_{j_{k}})\lambda_{j_{1}}^{2}...\lambda_{j_{k}}^{k+1}$$
$$ =\sum_{(j_{1},..,j_{k})\in I^{+}_{k}} \sum_{\tau \in \mathcal{S}(1,...,k)}\hat{p}_{j_{\tau (1)}}^{2}...\hat{p}_{j_{\tau(k)}}^{2}V(\lambda_{j_{\tau(1)}},...,\lambda_{j_{\tau(k)}})\lambda_{j_{\tau(1)}}^{2}...\lambda_{j_{\tau(k)}}^{k+1}$$
where $ \mathcal{S}(1,...,k)$ is the set formed of all the permutations of $(1,...,k)$.
Then using the fact that $V(\lambda_{j_{\tau(1)}},...,\lambda_{j_{\tau(k)}})=\varepsilon(\tau)V(\lambda_{j_{1}},...,\lambda_{j_{k}})$ we get
$$\sum_{(j_{1},..,j_{k})\in I_{k}}\hat{p}_{j_{1}}^{2}...\hat{p}_{j_{k}}^{2}V(\lambda_{j_{1}},...,\lambda_{j_{k}})\lambda_{j_{1}}^{2}...\lambda_{j_{k}}^{k+1}$$
$$=\sum_{(j_{1},..,j_{k})\in I^{+}_{k}} \sum_{\tau \in \mathcal{S}(1,...,k)}\hat{p}_{j_{1}}^{2}...\hat{p}_{j_{k}}^{2}\varepsilon(\tau)V(\lambda_{j_{1}},...,\lambda_{j_{k}})\lambda_{j_{1}}^{2}...\lambda_{j_{k}}^{2}\lambda_{j_{\tau(2)}}...\lambda_{j_{\tau(k)}}^{k-1}$$

\begin{equation}
\label{eq: expression-residual1}
=\sum_{(j_{1},..,j_{k})\in I^{+}_{k}} \hat{p}_{j_{1}}^{2}...\hat{p}_{j_{k}}^{2}V(\lambda_{j_{1}},...,\lambda_{j_{k}})\lambda_{j_{1}}^{2}...\lambda_{j_{k}}^{2}\left[ \sum_{\tau \in \mathcal{S}(1,...,k)}\varepsilon(\tau)\lambda_{j_{\tau(2)}}...\lambda_{j_{\tau(k)}}^{k-1}\right].
\end{equation}
On the other hand, 
\begin{equation}
\label{eq: expression-residual2}
 V(\lambda_{j_{1}},...,\lambda_{j_{k}})=\sum_{\tau \in \mathcal{S}(1,...,k)}\varepsilon(\tau)\lambda_{j_{1}}^{\tau(1)-1}...\lambda_{j_{k}}^{\tau(k)-1}=\sum_{\tau \in \mathcal{S}(1,...,k)}\varepsilon(\tau)\lambda_{j_{\tau(2)}}...\lambda_{j_{\tau(k)}}^{k-1}.
\end{equation}

To conclude (\ref{eq: expression-residual1}) and (\ref{eq: expression-residual2}) leads to

\begin{equation}
\label{eq: preuve-prop-exp-det2}
\sum_{(j_{1},..,j_{k})\in I_{k}}\hat{p}_{j_{1}}^{2}...\hat{p}_{j_{k}}^{2}V(\lambda_{j_{1}},...,\lambda_{j_{k}})\lambda_{j_{1}}^{2}...\lambda_{j_{k}}^{k+1}=\sum_{(j_{1},..,j_{k})\in I^{+}_{k}} \hat{p}_{j_{1}}^{2}...\hat{p}_{j_{k}}^{2}\lambda_{j_{1}}^{2}...\lambda_{j_{k}}^{2}V(\lambda_{j_{1}},...,\lambda_{j_{k}})^{2}.
\end{equation}

A similar reasoning can be applied to the numerator. Indeed using the fact that  $$V(\lambda_{j_{1}},...,\lambda_{j_{k}},\lambda_{i})= \prod_{l=1}^{k}(\lambda_{i}-\lambda_{j_{l}})\prod_{1\leq q < m\leq k} (\lambda_{j_{m}}-\lambda_{j_q})=\prod_{l=1}^{k}(\lambda_{i}-\lambda_{j_{l}})V(\lambda_{j_{1}},...,\lambda_{j_{k}})$$ we get
$$\sum_{(j_{1},..,j_{k})\in I_{k,i}}\hat{p}_{j_{1}}^{2}...\hat{p}_{j_{k}}^{2}V(\lambda_{j_{1}},...,\lambda_{j_{k}},\lambda_{i})\lambda_{j_{1}}...\lambda_{j_{k}}^{k}$$
$$=\sum_{(j_{1},..,j_{k})\in I^{+}_{k,i}}\sum_{\tau \in \mathcal{S}(1,...,k)}\hat{p}_{j_{\tau (1)}}^{2}...\hat{p}_{j_{\tau(k)}}^{2}\prod_{l=1}^{k}(\lambda_{i}-\lambda_{j_{\tau(l)}})V(\lambda_{j_{\tau(1)}},...,\lambda_{j_{\tau(k)}})\lambda_{j_{\tau(1)}}...\lambda_{j_{\tau(k)}}^{k}$$
$$=\sum_{(j_{1},..,j_{k})\in I^{+}_{k,i}}\hat{p}_{j_{1}}^{2}...\hat{p}_{j_{k}}^{2}\prod_{l=1}^{k}(\lambda_{i}-\lambda_{j_{l}})V(\lambda_{j_{1}},...,\lambda_{j_{k}})\lambda_{j_{1}}...\lambda_{j_{k}}\left[ \sum_{\tau \in \mathcal{S}(1,...,k)}\varepsilon(\tau)\lambda_{j_{\tau(2)}}...\lambda_{j_{\tau(k)}}^{k-1}\right] $$
$$
=\sum_{(j_{1},..,j_{k})\in I^{+}_{k}}\hat{p}_{j_{1}}^{2}...\hat{p}_{j_{k}}^{2}\lambda_{j_{1}}...\lambda_{j_{k}}V(\lambda_{j_{1}},...,\lambda_{j_{k}})^{2}\prod_{l=1}^{k}(\lambda_{i}-\lambda_{j_{l}})
$$
\begin{equation}
\label{eq: preuve-prop-exp-det3}
= (-1)^k\sum_{(j_{1},..,j_{k})\in I^{+}_{k}}\hat{p}_{j_{k}}^{2}\lambda_{j_{1}}^2...\lambda_{j_{k}}^2V(\lambda_{j_{1}},...,\lambda_{j_{k}})^{2}\prod_{l=1}^{k}(1-\frac{\lambda_{i}}{\lambda_{j_{l}}}).
\end{equation}

From (\ref{eq: preuve-prop-exp-det1}), (\ref{eq: preuve-prop-exp-det2}) and (\ref{eq: preuve-prop-exp-det3}) we conclude

$$\hat{Q}_{k}(\lambda_{i})=\sum_{(j_{1},..,j_{k})\in I^{+}_{k}}\left[ \dfrac{\hat{p}_{j_{1}}^{2}...\hat{p}_{j_{k}}^{2}\lambda_{j_{1}}^{2}...\lambda_{j_{k}}^{2}V(\lambda_{j_{1}},...,\lambda_{j_{k}})^{2}}{\sum_{(j_{1},..,j_{k})\in I^{+}_{k}} \hat{p}_{j_{1}}^{2}...\hat{p}_{j_{k}}^{2}\lambda_{j_{1}}^{2}...\lambda_{j_{k}}^{2}V(\lambda_{j_{1}},...,\lambda_{j_{k}})^{2}}\right] \prod_{l=1}^{k}(1-\frac{\lambda_{i}}{\lambda_{j_{l}}}).$$

\subsection{Proof of Proposition \ref{prop:empirical-risk}}
\begin{proof}
Let $k<n$. By definition of $\hat{\beta}_{k}$ and referring to results of Proposition \ref{prop:poly} we have
$$\Vert Y-X\hat{\beta}_{k}\Vert^{2}=\underset{Q \in \mathcal{P}_{k,1}}{\rm{min}} \Vert Q(XX^{T})Y\Vert^{2}=\underset{Q \in \mathcal{P}_{k,1}}{\rm{min}} \Vert Q(XX^{T})\left( X\beta^{*}+\varepsilon\right) \Vert^{2}.$$
Using the decomposition of $\beta^{*}$ and $\varepsilon$ on the left and right eigenvectors (i.e. $\beta^{*}=\sum_{i=1}^{p}\tilde{\beta}_{i}^{*}v_{i}$ where $\tilde{\beta}_{i}^{*}=\beta^{T}v_{i}$ and $\varepsilon=\sum_{i=1}^{n}\tilde{\varepsilon}_{i}u_{i}$ where $\tilde{\varepsilon}_{i}=\varepsilon^{T}u_{i}$) we get
$$\Vert Y-X\hat{\beta}_{k}\Vert^{2}= \underset{Q \in \mathcal{P}_{k,1}}{\rm{min}} \left(  \sum_{i =1 }^{n}Q(\lambda_{i})^{2}\left( \sqrt{\lambda_{i}}\tilde{\beta}_{i}^{*}+ \tilde{\varepsilon}_{i}\right)^{2} \right)$$
$$\leqslant \left( \underset{Q \in \mathcal{P}_{k,1}}{\rm{min}}\underset{\lambda \in \left[\lambda_{n}, \lambda_{1} \right] }{\rm{max}}Q(\lambda)^{2}\right)   \sum_{i=1}^{n}\left( \sqrt{\lambda_{i}}\tilde{\beta}_{i}^{*}+ \tilde{\varepsilon}_{i}\right)^{2}.$$
Then we have
$$\Vert Y-X\hat{\beta}_{k}\Vert^{2}\leq  \left( \underset{Q \in \mathcal{P}_{k,1} }{\rm{min}}\underset{\lambda \in \left[\lambda_{n}, \lambda_{1} \right] }{\rm{max}}\mid Q(\lambda)\mid\right) ^{2}\sum_{i =1}^{n}\left( \sqrt{\lambda_{i}}\tilde{\beta}_{i}^{*}+ \tilde{\varepsilon}_{i}\right)^{2}$$
$$\leqslant \dfrac{1}{\left[ C_{k}\left(\dfrac{\lambda_1+ \lambda_n}{\lambda_1-\lambda_n}  \right)\right]^{2} } \sum_{i =1}^{n}\left( \sqrt{\lambda_{i}}\tilde{\beta}_{i}^{*}+ \tilde{\varepsilon}_{i}\right)^{2} $$
where $C_{k}$ is the $k^{th}$ Chebyschev polynomial. This last inequalities follows from Proposition \ref{theo:chebychev}.
Then we use the fact that
 $$\left\lvert C_{k}\left( \frac{\lambda_{1}+\lambda_{n}}{\lambda_{1}-\lambda_{n}}\right) \right\rvert=\frac{1}{2}\left\lvert \left(\frac{\sqrt{\lambda}_{1}+\sqrt{\lambda}_{n}}{\sqrt{\lambda}_{1}-\sqrt{\lambda}_{n}} \right) ^{k}+\left(\frac{\sqrt{\lambda}_{1}-\sqrt{\lambda}_{n}}{\sqrt{\lambda}_{1}+\sqrt{\lambda}_{n}} \right) ^{k}\right\rvert$$
 $$= \frac{1}{2}\left\lvert \left(\frac{\sqrt{C(X^{T}X)}+1}{\sqrt{C(X^{T}X)}-1} \right) ^{k}+\left(\frac{\sqrt{C(X^{T}X)}-1}{\sqrt{C(X^{T}X)}+1} \right) ^{k}\right\rvert\geqslant \left(\frac{\sqrt{C(X^{T}X)}+1}{\sqrt{C(X^{T}X)}-1} \right) ^{k},$$
where $C(X^{T}X)=\frac{\lambda_{1}}{\lambda_{n}}$.
At last we get
$$\mathbb{E}\left[  \frac{1}{n}\Vert Y-X\hat{\beta}_{k}\Vert^{2}\right]  \leqslant \frac{1}{n}\left(\frac{\sqrt{C(X^{T}X)}-1}{\sqrt{C(X^{T}X)}+1} \right) ^{2k}\mathbb{E}\left[ \sum_{i=1}^{n}\left( \sqrt{\lambda_{i}}\tilde{\beta}_{i}^{*}+ \tilde{\varepsilon}_{i}\right)^{2} \right]  $$
and since the $(\varepsilon_{j})_{1\leq j \leq n}$ are assumed to be centered we conclude
$$\mathbb{E}\left[  \frac{1}{n}\Vert Y-X\hat{\beta}_{k}\Vert^{2}\right] \leqslant \left(\frac{\sqrt{C(X^{T}X)}-1}{\sqrt{C(X^{T}X)}+1} \right) ^{2k}\left[ \frac{1}{n}\parallel X\beta^{*}\parallel^{2}+ \sigma^{2}\right]  .$$
\end{proof}

\subsection{Proof of Proposition \ref{prop: first term}}
\begin{proof}
We have
$$\frac{1}{n}\parallel X\beta^{*}-XP^{*}_{k}(X^{T}X)X^{T}Y\parallel^{2}$$
$$\leq \frac{2}{n}\parallel X\beta^{*}-XP^{*}_{k}(X^{T}X)X^{T}X^{T}X\beta^{*}\parallel^{2}+\frac{2}{n}\parallel XP^{*}_{k}(X^{T}X)X^{T}\varepsilon\parallel^{2} $$
$$= \frac{2}{n}\parallel Q^{*}_{k}(XX^{T})X\beta^{*}\parallel^{2}+\frac{2}{n}\parallel XP^{*}_{k}(X^{T}X)X^{T}\varepsilon\parallel^{2}.$$

On one hand, by the same arguments as the ones used to prove Proposition \ref{prop:empirical-risk} (with no noise), we get
\begin{equation}
\label{eq:first term}
\frac{1}{n}\parallel Q^{*}_{k}(XX^{T})X\beta^{*}\parallel^{2}\leq 
\frac{1}{n}\left(\frac{\sqrt{C(X^{T}X)}-1}{\sqrt{C(X^{T}X)}+1} \right) ^{k} \Vert X\beta^{*}\Vert^{2}
\end{equation}
where $C(X^{T}X)=\frac{\lambda_{1}}{\lambda_{n}}$ is the ratio of the two extreme non zero eigenvalues of $X^{T}X$.

On the other hand we have
$$\frac{1}{n}\parallel XP^{*}_{k}(X^{T}X)X^{T}\varepsilon\parallel^{2}= \frac{1}{n}\sum_{i=1}^{n}\left(1-Q^{*}_{k}(\lambda_{i}) \right)^{2}\tilde{\varepsilon}_{i}^{2}$$
$$= \frac{1}{n}\sum_{i=1}^{n}\left(1-Q^{*}_{k}(\lambda_{i}) \right)^{2}p_{i}^2 \frac{\tilde{\varepsilon}_{i}^{2} }{p_{i}^2} $$
where $\tilde{\varepsilon}_{i}=\varepsilon^{T}u_{i}$. 
Notice that $Q^{*}_{k}(\lambda_{i})$ can be positive or negative and therefore the factors in the last sum oscillate above and below one (see Subsection \ref{subsection:shrink}). We bound this last term by above using concentration inequalities. Here a low variance of the noise is necessary to ensure that the term we consider is not too large.
The random variables $(\varepsilon_{i})_{1\leq i\leq n}$ are assumed to be i.i.d $\sim \mathcal{N}(0,\sigma_{n}^{2})$ and so are the $(\tilde{\varepsilon}_{i})_{1\leq i\leq n}$.
Therefore we use the following proposition which is a direct consequence of concentration inequalities for Gaussian random variables

\begin{prop}
\label{prop:concentration-variance}
Let $\mathcal{A}=\left\lbrace \cap_{i=1}^{n}\left \lvert \tilde{\varepsilon_{i}}\right \rvert \leq \delta \right\rbrace $.
If assumptions (H.1) holds then there exists a constant $C>1$ such that
$$\mathbb{P}(\mathcal{A}^{c})\leq \sum_{i=1}^{n}\mathbb{P}(\mid \tilde{\varepsilon}_{i}\mid > \delta )\leq \sum_{i=1}^{n} e^{-\frac{\delta^{2}}{2\sigma_{n}^{2}}}\leq ne^{-C\delta^{2}n}.$$ 
In addition with probability at least $1-n^{1-C}$ we have for all $i=1,...,n$
$$\left  \lvert \tilde{\varepsilon_{i}}\right \rvert \leq \sqrt{\frac{\log(n)}{n}}$$
\end{prop}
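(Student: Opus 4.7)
The plan is to exploit the rotational invariance of the Gaussian distribution to reduce the problem to a standard tail bound on independent centered Gaussians with variance $\sigma_n^2$, and then apply a union bound over the $n$ coordinates.

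First I would observe that because $\varepsilon \sim \mathcal{N}(0,\sigma_n^2 I_n)$ and $U$ is an orthogonal matrix (its columns $u_1,\dots,u_n$ form an orthonormal basis by the SVD recalled in Section~\ref{section:Krylov}), the vector $U^T\varepsilon$ whose $i$-th coordinate is exactly $\tilde{\varepsilon}_i=u_i^T\varepsilon$ is again $\mathcal{N}(0,\sigma_n^2 I_n)$. In particular each $\tilde{\varepsilon}_i$ is marginally $\mathcal{N}(0,\sigma_n^2)$, and the family $(\tilde{\varepsilon}_i)_{1\leq i\leq n}$ is still i.i.d. Gaussian centered with variance $\sigma_n^2$. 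This identity in law is really the only structural input needed.

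Next I would invoke the standard one-dimensional Gaussian tail inequality: for $Z\sim \mathcal{N}(0,\sigma^2)$ and $\delta>0$, $\mathbb{P}(|Z|>\delta)\leq e^{-\delta^2/(2\sigma^2)}$, which follows by a Chernoff argument from the moment generating function of a Gaussian. Applying this to each $\tilde{\varepsilon}_i$ with variance $\sigma_n^2$ and then combining with the union bound $\mathbb{P}(\mathcal{A}^c)\leq \sum_{i=1}^n\mathbb{P}(|\tilde{\varepsilon}_i|>\delta)$ immediately yields the chain of inequalities $\mathbb{P}(\mathcal{A}^c)\leq n e^{-\delta^2/(2\sigma_n^2)}$. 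Assumption (H.1) means that $\sigma_n^2\leq K/n$ for some absolute constant $K>0$, so setting $C:=1/(2K)$ one obtains the stated bound $ne^{-C\delta^2 n}$.

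For the second assertion, I would specialize $\delta=\sqrt{\log n/n}$ in the bound just obtained: this gives $\mathbb{P}(\mathcal{A}^c)\leq n\cdot n^{-C}=n^{1-C}$, so that on the complementary event (which has probability at least $1-n^{1-C}$) we simultaneously have $|\tilde{\varepsilon}_i|\leq \sqrt{\log n/n}$ for all $i=1,\dots,n$, as claimed. The only real subtlety, and the point to double-check, is the interplay between the constant hidden in $\sigma_n^2=\mathcal{O}(1/n)$ in (H.1) and the requirement $C>1$ in the statement: one needs the implicit constant to be small enough (concretely $K<1/2$, which is compatible with the explicit choice $\delta_n=1/\sqrt n$ up to a harmless absolute factor) for the probability $n^{1-C}$ to actually decay. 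No other obstacle arises; the proof is a textbook application of Gaussian concentration once the rotational invariance step is noticed.
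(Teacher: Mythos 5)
Your proposal is correct and follows essentially the same route as the paper, which simply asserts the result as a ``direct consequence of concentration inequalities for Gaussian random variables'': a union bound over the coordinates, the Chernoff tail bound $\mathbb{P}(|Z|>\delta)\leq e^{-\delta^{2}/(2\sigma_{n}^{2})}$ applied to the $\tilde{\varepsilon}_{i}=u_{i}^{T}\varepsilon$ (which remain i.i.d.\ $\mathcal{N}(0,\sigma_{n}^{2})$ by rotational invariance, as the paper also notes), assumption (H.1) to absorb $\sigma_{n}^{2}$ into $e^{-C\delta^{2}n}$, and the choice $\delta=\sqrt{\log n/n}$. Your closing remark is also well taken: with the literal normalization $\sigma_{n}^{2}=1/n$ of (H.1) the argument only yields $C=1/2$, so obtaining $C>1$ indeed requires the implicit constant in $\sigma_{n}^{2}=\mathcal{O}(1/n)$ to be strictly smaller than $1/2$, a point the paper glosses over.
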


With Proposition \ref{prop:concentration-variance} we deduce that with probability at least $1-n^{1-C}$ where $C>1$ we have for all $i=1,...,n$
 $$\frac{1}{n}\parallel XP^{*}_{k}(X^{T}X)X^{T}\varepsilon\parallel^{2}\leq  \frac{1}{n}\left(\sum_{i=1}^{n}\left(1-Q^{*}_{k}(\lambda_{i}) \right)^{2}p_{i}^{2}\right) \frac{\log(n)}{nL}. $$

Then using the triangular inequality and (\ref{eq:first term}) we state
\begin{equation}
\label{eq:Second term}
\frac{1}{n}\parallel XP^{*}_{k}(X^{T}X)X^{T}\varepsilon\parallel^{2}\leq \frac{1}{n}\left[2+2  \left(\frac{\sqrt{C(X^{T}X)}-1}{\sqrt{C(X^{T}X)}+1} \right) ^{2k} \right] \Vert X\beta^{*}\Vert^{2}\frac{\log(n)}{nL}.
\end{equation}

Combining (\ref{eq:first term}) and (\ref{eq:Second term}) we conclude
$$\frac{1}{n}\parallel X\beta^{*}-XP^{*}_{k}(X^{T}X)X^{T}Y\parallel^{2} $$
\begin{equation}
\label{eq:First-term-final}
\leq \frac{1}{n}\left[ 2\left(\frac{\sqrt{C(X^{T}X)}-1}{\sqrt{C(X^{T}X)}+1} \right) ^{2k}+4\dfrac{\log(n)}{nL}\left(1+ \left(  \frac{\sqrt{C(X^{T}X)}-1}{\sqrt{C(X^{T}X)}+1}\right) ^{2k} \right)  \right]  \Vert X\beta^{*}\Vert^{2}.
\end{equation}

\end{proof}

\subsection{Proof of Proposition \ref{prop:pred-second-term}}
\begin{proof}
Using the SVD of $XX^{T}$ we get
\begin{equation}
\label{eq:secondterm-SVD}
\frac{1}{n}\parallel\left( \hat{Q}_{k}(X^{T}X)-Q^{*}_{k}(X^{T}X)\right) Y\parallel^{2}=\frac{1}{n}\sum_{i=1}^{n}\left( \hat{Q}_{k}(\lambda_{i})-Q^{*}_{k}(\lambda_{i})\right) ^{2}\hat{p}_{i}^{2}.
\end{equation}

\begin{rmq} 
We can notice that
$$\sum_{i=1}^{n}\left( \hat{Q}_{k}(\lambda_{i})-Q^{*}_{k}(\lambda_{i})\right) ^{2}\hat{p}_{i}^{2}\leq \frac{1}{\lambda_{n}}\sum_{i=1}^{n}\left( \hat{Q}_{k}(\lambda_{i})-Q^{*}_{k}(\lambda_{i})\right) ^{2}\lambda_{i}\hat{p}_{i}^{2}\leq  \frac{1}{\lambda_{n}}\parallel \hat{Q}_{k}-Q^{*}_{k}\parallel_{\hat{\mu }}^{2}.$$
\end{rmq}

We define
$$\hat{D}_{j_{1},..,j_{k}}:=\hat{p}_{j_{1}}^{2}...\hat{p}_{j_{k}}^{2}\lambda_{j_{1}}^{2}...\lambda_{j_{k}}^{2}V(\lambda_{j_{1}},...,\lambda_{j_{k}})^{2}>0,$$

$$D_{j_{1},..,j_{k}}:=p_{j_{1}}^{2}...p_{j_{k}}^{2}\lambda_{j_{1}}^{2}...\lambda_{j_{k}}^{2}V(\lambda_{j_{1}},...,\lambda_{j_{k}})^{2}>0.$$
and 
$$\hat{D}_{k}:=\sum_{(j_{1},..,j_{k})\in I^{+}_{k}}\hat{D}_{j_{1},..,j_{k}}$$,
$$D_{k}:=\sum_{(j_{1},..,j_{k})\in I^{+}_{k}}D_{j_{1},..,j_{k}}.$$
We recall that
$$\hat{Q}_{k}(\lambda_{i})=
(-1)^{k}\dfrac{\sum_{(j_{1},..,j_{k})\in I^{+}_{k}} \hat{D}_{j_{1},..,j_{k}}\prod_{l=1}^{k}(\frac{\lambda_{i}}{\lambda_{j_{l}}}-1) }{\sum_{(j_{1},..,j_{k})\in I^{+}_{k}}\hat{D}_{j_{1},..,j_{k}}}$$
and
$$Q_{k}(\lambda_{i})=
(-1)^{k}\dfrac{\sum_{(j_{1},..,j_{k})\in I^{+}_{k}} D_{j_{1},..,j_{k}}\prod_{l=1}^{k}(\frac{\lambda_{i}}{\lambda_{j_{l}}}-1) }{\sum_{(j_{1},..,j_{k})\in I^{+}_{k}}D_{j_{1},..,j_{k}}}.$$
We have
$$\left \lvert \hat{Q}_{k}(\lambda_{i})- Q_{k}^{*}(\lambda_{i}) \right \rvert \leq \left \lvert \dfrac{\sum_{(j_{1},..,j_{k})\in I^{+}_{k}}\left[ \hat{D}_{j_{1},..,j_{k}}\prod_{l=1}^{k}(\frac{\lambda_{i}}{\lambda_{j_{l}}}-1)\right] }{\hat{D}_{k}}-\dfrac{\sum_{(j_{1},..,j_{k})\in I^{+}_{k}}\left[ D_{j_{1},..,j_{k}}\prod_{l=1}^{k}(\frac{\lambda_{i}}{\lambda_{j_{l}}}-1)\right] }{D_{k}} \right \rvert$$
$$\leq \left \lvert \dfrac{\sum_{(j_{1},..,j_{k})\in I^{+}_{k}}\left[ D_{j_{1},..,j_{k}}\prod_{l=1}^{k}(\frac{\lambda_{i}}{\lambda_{j_{l}}}-1)\right] }{D_{k}}-\dfrac{\sum_{(j_{1},..,j_{k})\in I^{+}_{k}}\left[ \hat{D}_{j_{1},..,j_{k}}\prod_{l=1}^{k}(\frac{\lambda_{i}}{\lambda_{j_{l}}}-1)\right] }{D_{k}}  \right \rvert$$
$$+
\left \lvert  \dfrac{\sum_{(j_{1},..,j_{k})\in I^{+}_{k}}\left[ \hat{D}_{j_{1},..,j_{k}}\prod_{l=1}^{k}(\frac{\lambda_{i}}{\lambda_{j_{l}}}-1)\right] }{D_{k}}-\dfrac{\sum_{(j_{1},..,j_{k})\in I^{+}_{k}}\left[ \hat{D}_{j_{1},..,j_{k}}\prod_{l=1}^{k}(\frac{\lambda_{i}}{\lambda_{j_{l}}}-1)\right] }{\hat{D}_{k}}  \right \rvert$$

$$\leq \dfrac{1}{D_{k}}\left \lvert  \sum_{(j_{1},..,j_{k})\in I^{+}_{k}}\left[ D_{j_{1},..,j_{k}}-\hat{D}_{j_{1},..,j_{k}} \right] \prod_{l=1}^{k}(\frac{\lambda_{i}}{\lambda_{j_{l}}}-1) \right \rvert$$
$$+ \dfrac{1}{\left( D_{k}\hat{D}_{k}\right) }\left \lvert \sum_{(j_{1},..,j_{k})\in I^{+}_{k}}\hat{D}_{j_{1},..,j_{k}} \prod_{l=1}^{k}(\frac{\lambda_{i}}{\lambda_{j_{l}}}-1) \right \rvert \left \lvert  \sum_{(j_{1},..,j_{k})\in I^{+}_{k}}\left[ D_{j_{1},..,j_{k}}-\hat{D}_{j_{1},..,j_{k}} \right]\right \rvert $$

Besides we have $$\left \lvert \sum_{(j_{1},..,j_{k})\in I^{+}_{k}}\hat{D}_{j_{1},..,j_{k}} \prod_{l=1}^{k}(\frac{\lambda_{i}}{\lambda_{j_{l}}}-1) \right \rvert \leq  \sum_{(j_{1},..,j_{k})\in I^{+}_{k}}\hat{D}_{j_{1},..,j_{k}} \left[  \underset{I_{k}^{+}}{\rm{max}}\left( \prod_{l=1}^{k}\left \lvert\frac{\lambda_{i}}{\lambda_{j_{l}}}-1\right \rvert\right)\right]   , $$

$$\left \lvert  \sum_{(j_{1},..,j_{k})\in I^{+}_{k}}\left[ D_{j_{1},..,j_{k}}-\hat{D}_{j_{1},..,j_{k}} \right] \prod_{l=1}^{k}(\frac{\lambda_{i}}{\lambda_{j_{l}}}-1) \right \rvert =\left \lvert  \sum_{(j_{1},..,j_{k})\in I^{+}_{k}} D_{j_{1},..,j_{k}}\left(1-\frac{\hat{p}_{j_{1}}^{2}...\hat{p}_{j_{k}}^{2}}{p_{j_{1}}^{2}...p_{j_{k}}^{2}} \right) \prod_{l=1}^{k}(\frac{\lambda_{i}}{\lambda_{j_{l}}}-1) \right \rvert$$
$$\leq \sum_{(j_{1},..,j_{k})\in I^{+}_{k}} D_{j_{1},..,j_{k}}\left[  \underset{I_{k}^{+}}{\rm{max}}\left( 1-\frac{\hat{p}_{j_{1}}^{2}...\hat{p}_{j_{k}}^{2}}{p_{j_{1}}^{2}...p_{j_{k}}^{2}} \right)\right]  \left[ \underset{I_{k}^{+}}{\rm{max}}\left( \prod_{l=1}^{k}\left \lvert \frac{\lambda_{i}}{\lambda_{j_{l}}}-1\right \rvert\right)\right]  $$
and $$\left \lvert  \sum_{(j_{1},..,j_{k})\in I^{+}_{k}}\left[ D_{j_{1},..,j_{k}}-\hat{D}_{j_{1},..,j_{k}} \right]=  \right \rvert\left \lvert  \sum_{(j_{1},..,j_{k})\in I^{+}_{k}} D_{j_{1},..,j_{k}}\left(1-\frac{\hat{p}_{j_{1}}^{2}...\hat{p}_{j_{k}}^{2}}{p_{j_{1}}^{2}...p_{j_{k}}^{2}} \right) \right \rvert $$
$$\leq \sum_{(j_{1},..,j_{k})\in I^{+}_{k}} D_{j_{1},..,j_{k}}\left[  \underset{I_{k}^{+}}{\rm{max}}\left( 1-\frac{\hat{p}_{j_{1}}^{2}...\hat{p}_{j_{k}}^{2}}{p_{j_{1}}^{2}...p_{j_{k}}^{2}} \right)\right]. $$

Therefore we get
\begin{equation}
\label{eq:residual-difference-1}
\left \lvert \hat{Q}_{k}(\lambda_{i})- Q_{k}^{*}(\lambda_{i}) \right \rvert \leq  2\left[  \underset{I_{k}^{+}}{\textrm{max}}\left( 1-\frac{\hat{p}_{j_{1}}^{2}...\hat{p}_{j_{k}}^{2}}{p_{j_{1}}^{2}...p_{j_{k}}^{2}} \right)\right]\left[ \underset{I_{k}^{+}}{\rm{max}}\left( \prod_{l=1}^{k}\left \lvert \frac{\lambda_{i}}{\lambda_{j_{l}}}-1\right \rvert\right)\right] 
\end{equation}
where $$\frac{\hat{p}_{j_{1}}^{2}...\hat{p}_{j_{k}}^{2}}{p_{j_{1}}^{2}...p_{j_{k}}^{2}}=\left(1+\frac{\varepsilon_{j_{1}}}{p_{j_{1}}} \right)^{2}...\left(1+\frac{\varepsilon_{j_{k}}}{p_{j_{k}}} \right)^{2} .$$
From Proposition \ref{prop:concentration-variance} and (H.2) we have that there exists a constant $C>1$ such that 
\begin{equation}
\label{eq:residual-difference-2}
\left( 1-\frac{C}{\sqrt{L}}\sqrt{\frac{\log n}{n}}\right)^{2k}\leq\frac{\hat{p}_{j_{1}}^{2}...\hat{p}_{j_{k}}^{2}}{p_{j_{1}}^{2}...p_{j_{k}}^{2}}\leq \left( 1+\frac{C}{\sqrt{L}}\sqrt{\frac{\log n}{n}}\right)^{2k}
\end{equation}
with probablity at least $1-n^{1-C}$.

From (\ref{eq:residual-difference-1}) and (\ref{eq:residual-difference-2}) we deduce that there exists a constant $\tilde{C}$ such that with probablity at least $1-n^{1-C}$ where $C>1$, 
\begin{equation}
\label{eq:residual-difference-3}
\left \lvert \hat{Q}_{k}(\lambda_{i})- Q_{k}^{*}(\lambda_{i}) \right \rvert \leq \frac{2k\tilde{C}}{\sqrt{L}}\sqrt{\frac{\log n}{n}}\left[ \underset{I_{k}^{+}}{\textrm{max}}\left( \prod_{l=1}^{k}\left \lvert \frac{\lambda_{i}}{\lambda_{j_{l}}}-1\right \rvert\right)\right] . 
\end{equation}
Finally using again Proposition \ref{prop:concentration-variance} and (\ref{eq:residual-difference-3}) we get

$$\frac{1}{n}\parallel\left( \hat{Q}_{k}(X^{T}X)-Q^{*}_{k}(X^{T}X)\right) Y\parallel^{2}=\frac{1}{n}\sum_{i=1}^{n}\left( \hat{Q}_{k}(\lambda_{i})-Q^{*}_{k}(\lambda_{i})\right) ^{2}\dfrac{\hat{p}_{i}^{2}}{p_{i}^{2}}p_{i}^{2}$$
$$\leq \frac{4k^{2}\tilde{C}^{2}}{L}\frac{\log n}{n^{2}}\left(1+\frac{C}{\sqrt{L}}\sqrt{\frac{\log n}{n}} \right)\sum_{i=1}^{n}\left[ \underset{I_{k}^{+}}{\textrm{max}}\left( \prod_{l=1}^{k}\left \lvert \frac{\lambda_{i}}{\lambda_{j_{l}}}-1\right \rvert\right)^{2}p_{i}^{2}\right]$$
where
$$\sum_{i=1}^{n}\left[ \underset{I_{k}^{+}}{\textrm{max}}\left( \prod_{l=1}^{k}\left \lvert \frac{\lambda_{i}}{\lambda_{j_{l}}}-1\right \rvert\right)^{2}p_{i}^{2}\right]=\Vert X\beta^{*}\Vert^{2}_{W}$$
 with $ W=\underset{1\leq i \leq n}{\textrm{diag}}\left( \underset{I_{k}^{+}}{\rm{max}}\left( \prod_{l=1}^{k}\left \lvert \frac{\lambda_{i}}{\lambda_{j_{l}}}-1\right \rvert^{2}\right) \right).$

\begin{rmq}

We can state sharper bounds for the ratio $\dfrac{\hat{Q}_{k}(\lambda_{i})}{Q_{k}(\lambda_{i })}$ for $i=1$ and $i=n$. 
Indeed we have 
$$\dfrac{\hat{Q}_{k}(\lambda_{i})}{Q^{*}_{k}(\lambda_{i})}=\dfrac{\sum_{(j_{1},..,j_{k})\in I_{k}^{+}}  \hat{D}_{j_{1},..,j_{k}}\prod_{l=1}^{k}(\frac{\lambda_{i}}{\lambda_{j_{l}}}-1)}{\sum_{(j_{1},..,j_{k})\in I_{k}^{+}} D_{j_{1},..,j_{k}}\prod_{l=1}^{k}(\frac{\lambda_{i}}{\lambda_{j_{l}}}-1)}\dfrac{\sum_{(j_{1},..,j_{k})\in I_{k}^{+}} D_{j_{1},..,j_{k}}}{\sum_{(j_{1},..,j_{k})\in I_{k}^{+}} \hat{D}_{j_{1},..,j_{k}}}$$

\begin{equation}
\label{eq:product}
= \dfrac{\sum_{(j_{1},..,j_{k})\in I_{k}^{+}} D_{j_{1},..,j_{k}}\prod_{l=1}^{k}(\frac{\lambda_{i}}{\lambda_{j_{l}}}-1)\left( \frac{\hat{p}_{j_{1}}^{2}...\hat{p}_{j_{k}}^{2}}{p_{j_{1}}^{2}...p_{j_{k}}^{2}}\right) }{\sum_{(j_{1},..,j_{k})\in I_{k}^{+}} D_{j_{1},..,j_{k}}\prod_{l=1}^{k}(\frac{\lambda_{i}}{\lambda_{j_{l}}}-1)}\dfrac{\sum_{(j_{1},..,j_{k})\in I_{k}^{+}} D_{j_{1},..,j_{k}}}{\sum_{(j_{1},..,j_{k})\in I_{k}^{+}} D_{j_{1},..,j_{k}}\left( \frac{\hat{p}_{j_{1}}^{2}...\hat{p}_{j_{k}}^{2}}{p_{j_{1}}^{2}...p_{j_{k}}^{2}}\right) }
\end{equation}

We are going to use again concentration inequalities to bound by above the two factors of the product in (\ref{eq:product}).
In fact on the event $\mathcal{A}$ we have (see (\ref{eq:residual-difference-2}))
$$\left( 1-C\sqrt{\frac{\log n}{nL}}\right)^{2k}\leq\frac{\hat{p}_{j_{1}}^{2}...\hat{p}_{j_{k}}^{2}}{p_{j_{1}}^{2}...p_{j_{k}}^{2}}.$$

Therefore, because all the terms $D_{j_{1},..,j_{k}}$ are positive, we deduce that
\begin{equation}
\label{eq:ratio-residual-1}
\dfrac{\sum_{(j_{1},..,j_{k})\in I_{k}^{+}} D_{j_{1},..,j_{k}}}{\sum_{(j_{1},..,j_{k})\in I_{k}} D_{j_{1},..,j_{k}}\left( \frac{\hat{p}_{j_{1}}^{2}...\hat{p}_{j_{k}}^{2}}{p_{j_{1}}^{2}...p_{j_{k}}^{2}}\right) }\leq \dfrac{1}{\left( 1-C\sqrt{\frac{\log n}{nL}}\right)^{2k}}.
\end{equation}
If we assume $C\sqrt{\frac{\log n}{nL}}<1$,  we get
\begin{equation}
\label{eq:second product}
\dfrac{\sum_{(j_{1},..,j_{k})\in I_{k}^{+}} D_{j_{1},..,j_{k}}}{\sum_{(j_{1},..,j_{k})\in I_{k}} D_{j_{1},..,j_{k}}\left( \frac{\hat{p}_{j_{1}}^{2}...\hat{p}_{j_{k}}^{2}}{p_{j_{1}}^{2}...p_{j_{k}}^{2}}\right) }\leq 1+O\left( 2k\sqrt{\frac{\log n}{nL}}\right) 
\end{equation}

Now we are going to bound by above the first factor in (\ref{eq:product}) for $i=1$ and $i=n$.
Let $i=1$. Then for all $(j_{1},..,j_{k})\in I_{k}^{+}$ we have 
$\prod_{l=1}^{k}(\frac{\lambda_{1}}{\lambda_{j_{l}}}-1)>0 $ and thus all the terms in the first factor are positive.
Therefore on the event $\mathcal{A}$ we get 
\begin{equation}
\label{eq:first product-1}
 \left \lvert\dfrac{\sum_{(j_{1},..,j_{k})\in I_{k}^{+}} D_{j_{1},..,j_{k}}\prod_{l=1}^{k}(\frac{\lambda_{1}}{\lambda_{j_{l}}}-1)\left( \frac{\hat{p}_{j_{1}}^{2}...\hat{p}_{j_{k}}^{2}}{p_{j_{1}}^{2}...p_{j_{k}}^{2}}\right) }{\sum_{(j_{1},..,j_{k})\in I_{k}^{+}} D_{j_{1},..,j_{k}}\prod_{l=1}^{k}(\frac{\lambda_{1}}{\lambda_{j_{l}}}-1)}\right \rvert\leq \left( 1+C\sqrt{\frac{\log n}{nL}}\right)^{2k}=1+O\left( 2k\sqrt{\frac{\log n}{nL}}\right) .
\end{equation}
Let $i=n$. Then for all $(j_{1},..,j_{k})\in I_{k,n}^{+}$ we have 
$\prod_{l=1}^{k}(\frac{\lambda_{1}}{\lambda_{j_{l}}}-1)>0 $ if $k$ is even and $\prod_{l=1}^{k}(\frac{\lambda_{1}}{\lambda_{j_{l}}}-1)<0 $ if $k$ is odd. Thus on the event $\mathcal{A}$ we have
\begin{equation}
\label{eq:first-product-2}
 \left \lvert\dfrac{\sum_{(j_{1},..,j_{k})\in I_{k}^{+}} \left( D_{j_{1},..,j_{k}}\prod_{l=1}^{k}(\frac{\lambda_{n}}{\lambda_{j_{l}}}-1)\frac{\hat{p}_{j_{1}}^{2}...\hat{p}_{j_{k}}^{2}}{p_{j_{1}}^{2}...p_{j_{k}}^{2}}\right) }{\sum_{(j_{1},..,j_{k})\in I_{k}^{+}} D_{j_{1},..,j_{k}}\prod_{l=1}^{k}(\frac{\lambda_{1}}{\lambda_{j_{l}}}-1)} \right \rvert\leq \left( 1+C\sqrt{\frac{\log n}{nL}}\right)^{2k}=1+O\left( 2k\sqrt{\frac{\log n}{nL}}\right) .
 \end{equation}
 
To conclude from (\ref{eq:first product-1}), (\ref{eq:first-product-2}) and (\ref{eq:second product}) we have for $i=1$ and $i=n$
$$\left \lvert \dfrac{\hat{Q}_{k}(\lambda_{i})}{Q^{*}_{k}(\lambda_{i})}\right \rvert\leq1+O\left( 2k\sqrt{\frac{\log n}{nL}}\right). $$
\end{rmq}
\end{proof}

\bibliographystyle{apalike}
\bibliography{PLSbiblio}

\begin{thebibliography}{}

\bibitem[Blanchard and Math{\'e}, 2012]{BLAN12}
Blanchard, G. and Math{\'e}, P. (2012).
\newblock Discrepancy principle for statistical inverse problems with
  application to conjugate gradient iteration.
\newblock {\em Inverse Problems}, 28(11):115011.

\bibitem[Boulesteix and Strimmer, 2007]{BOU07}
Boulesteix, A.-L. and Strimmer, K. (2007).
\newblock Partial least squares: a versatile tool for the analysis of
  high-dimensional genomic data.
\newblock {\em Briefings in bioinformatics}, 8(1):32--44.

\bibitem[Butler and Denham, 2000]{BUTLER00}
Butler, N.~A. and Denham, M.~C. (2000).
\newblock The peculiar shrinkage properties of partial least squares
  regression.
\newblock {\em Journal of the Royal Statistical Society: Series B (Statistical
  Methodology)}, 62(3):585--593.

\bibitem[De~Jong, 1995]{JONG95}
De~Jong, S. (1995).
\newblock Pls shrinks.
\newblock {\em Journal of chemometrics}, 9(4):323--326.

\bibitem[Delaigle and Hall, 2012]{DEL12}
Delaigle, A. and Hall, P. (2012).
\newblock Methodology and theory for partial least squares applied to
  functional data.
\newblock {\em The Annals of Statistics}, 40(1):322--352.

\bibitem[Engl et~al., 1996]{MR1408680}
Engl, H.~W., Hanke, M., and Neubauer, A. (1996).
\newblock {\em Regularization of inverse problems}, volume 375 of {\em
  Mathematics and its Applications}.
\newblock Kluwer Academic Publishers Group, Dordrecht.

\bibitem[Frank and Friedman, 1993]{FRANCK93}
Frank, l.~E. and Friedman, J.~H. (1993).
\newblock A statistical view of some chemometrics regression tools.
\newblock {\em Technometrics}, 35(2):109--135.

\bibitem[Garthwaite, 1994]{GAR94}
Garthwaite, P.~H. (1994).
\newblock An interpretation of partial least squares.
\newblock {\em Journal of the American Statistical Association},
  89(425):122--127.

\bibitem[Goutis, 1996]{GOU96}
Goutis, C. (1996).
\newblock Partial least squares algorithm yields shrinkage estimators.
\newblock {\em The Annals of Statistics}, 24(2):816--824.

\bibitem[Helland, 1988]{HE88}
Helland, I.~S. (1988).
\newblock On the structure of partial least squares regression.
\newblock {\em Communications in statistics-Simulation and Computation},
  17(2):581--607.

\bibitem[Helland, 1990]{HE90}
Helland, I.~S. (1990).
\newblock Partial least squares regression and statistical models.
\newblock {\em Scandinavian Journal of Statistics}, pages 97--114.

\bibitem[Helland, 2001]{HE01}
Helland, I.~S. (2001).
\newblock Some theoretical aspects of partial least squares regression.
\newblock {\em Chemometrics and Intelligent Laboratory Systems}, 58(2):97--107.

\bibitem[Jolliffe, 1982]{JO82}
Jolliffe, I.~T. (1982).
\newblock A note on the use of principal components in regression.
\newblock {\em Applied Statistics}, pages 300--303.

\bibitem[Kr{\"a}mer, 2007]{KRA07}
Kr{\"a}mer, N. (2007).
\newblock An overview on the shrinkage properties of partial least squares
  regression.
\newblock {\em Computational Statistics}, 22(2):249--273.

\bibitem[L{\^e}~Cao et~al., 2008]{MR2457048}
L{\^e}~Cao, K.-A., Rossouw, D., Robert-Grani{\'e}, C., and Besse, P. (2008).
\newblock A sparse {PLS} for variable selection when integrating omics data.
\newblock {\em Stat. Appl. Genet. Mol. Biol.}, 7(1):Art. 35, 31.

\bibitem[Lingjaerde and Christophersen, 2000]{LIN00}
Lingjaerde, O.~C. and Christophersen, N. (2000).
\newblock Shrinkage structure of partial least squares.
\newblock {\em Scandinavian Journal of Statistics}, 27(3):459--473.

\bibitem[Martens and Naes, 1992]{MA92}
Martens, H. and Naes, T. (1992).
\newblock {\em Multivariate calibration}.
\newblock Wiley.

\bibitem[Naes and Martens, 1985]{NAES85}
Naes, T. and Martens, H. (1985).
\newblock Comparison of prediction methods for multicollinear data.
\newblock {\em Communications in Statistics-Simulation and Computation},
  14(3):545--576.

\bibitem[Phatak and de~Hoog, 2002]{PHATAK02}
Phatak, A. and de~Hoog, F. (2002).
\newblock Exploiting the connection between pls, lanczos methods and conjugate
  gradients: alternative proofs of some properties of pls.
\newblock {\em Journal of Chemometrics}, 16(7):361--367.

\bibitem[Saad, 1992]{SAAD92}
Saad, Y. (1992).
\newblock {\em Numerical methods for large eigenvalue problems}, volume 158.
\newblock SIAM.

\bibitem[Wold, 1985]{WO85}
Wold, H. (1985).
\newblock Partial least squares.
\newblock {\em Encyclopedia of statistical sciences}.

\bibitem[Wold et~al., 1983]{WO83}
Wold, S., Martens, H., and Wold, H. (1983).
\newblock The multivariate calibration problem in chemistry solved by the pls
  method.
\newblock In {\em Matrix pencils}, pages 286--293. Springer.

\bibitem[Wold et~al., 2001]{WOLD01PLS}
Wold, S., Sj{\"o}str{\"o}m, M., and Eriksson, L. (2001).
\newblock Pls-regression: a basic tool of chemometrics.
\newblock {\em Chemometrics and intelligent laboratory systems},
  58(2):109--130.

\end{thebibliography}

\end{document}